\newcommand{\eps}{\varepsilon}
\newcommand{\R}{\mathbb{R}}
\newcommand{\C}{\mathbb{C}}
\newcommand{\N}{\mathbb{N}}
\DeclarePairedDelimiter\abs{\lvert}{\rvert}
\DeclarePairedDelimiter\norm{\lVert}{\rVert}
\newcommand{\longcomment}[1]{}
\newtheorem{theorem}{Theorem}
\newtheorem{lemma}[theorem]{Lemma}
\newtheorem{proposition}[theorem]{Proposition}
\newtheorem{problem}[theorem]{Problem}
\newtheorem*{problem*}{Problem}
\newtheorem*{conjecture*}{Conjecture}
\theoremstyle{definition}
\numberwithin{equation}{section}
\def\d{\mathrm{d}}
\def\supp{\mathrm{supp}}
\def\one{\mathbbm{1}}
\def\low{\mathrm{low}}
\def\high{\mathrm{high}}
\begin{document}

\title[Extension estimates on a strip]{Fourier extension estimates on a strip in $\mathbb{R}^2$}

\author[A. Bulj]{Aleksandar Bulj}
\address{Aleksandar Bulj\\
        Department of Mathematics, Faculty of Science\\
        University of Zagreb\\
        Bijeni\v{c}ka cesta 30\\
        10000 Zagreb, Croatia}
\email{aleksandar.bulj@math.hr}

\author[S. Shiraki]{Shobu Shiraki}
\address{Shobu Shiraki\\
        Department of Mathematics, Faculty of Science\\
        University of Zagreb\\
        Bijeni\v{c}ka cesta 30\\
        10000 Zagreb, Croatia
        }
\email{shobu.shiraki@math.hr}

\date{\today}

\subjclass[2020]{42B10}

\keywords{Fourier extension operator, Radon transform}

\begin{abstract}
Given a smooth curve with nonzero curvature $\Sigma\subset \mathbb{R}^2$, let  $E_{\Sigma}$ denote the associated Fourier extension operator.
For both general compact curves and the parabola, we characterize the pairs $(p,q)\in [1,\infty]^2$ for which the estimates
$\|E_{\Sigma}f\|_{L^q(\Omega)}\leq C\|f\|_{L^p(\Sigma)}$ and $(\mathcal{R}(|E_{\Sigma}f|^{q}))^{\frac{1}{q}}\leq C\|f\|_{L^p(\Sigma)}$ 
hold, where $\Omega$ is a strip in $\mathbb{R}^2$ and $\mathcal{R}$ denotes the Radon transform.
This work continues the study of mass concentration of $x\mapsto E_{\Sigma}f(x)$ near lines in $\mathbb{R}^2$, initiated by Bennett and Nakamura \cite{BN21} and later extended by Bennett, Nakamura, and the second author in \cite{BNS24}, where expressions of the form $(\mathcal{R}(|E_{\Sigma}f|^{2}))^{\frac{1}{2}}$ were studied.
\end{abstract}

\maketitle

% \subsection*{Comments by Shiraki}
% \begin{itemize}
%     \item I reorganized the note for myself to get a better picture of the study. It is currently stated with the parabola and sphere, just for simplicity. 
%     \item I am not sure how we should write the introduction yet. It is currently a collection of main results.
%     \item Let me know if I am misunderstanding something (relations, logic, etc...).
%     \item I have not added the part about $T_{\high}$ yet.
% \end{itemize}

% \subsection*{Comments by Alex (18.8.2025)}
% \begin{itemize}
%     \item Added the introduction part.
%     \item Changed the definition of the point $F$ in Figure \ref{f:pqrangeCircle} (Tomas-Stein point). It was not used anymore, but I need it in the statemnt of the theorem \ref{t:restriction_2d}.
% \end{itemize}

\section{Introduction}

Let $\Sigma \subset \R^2$ be a $C^2$ curve. We write $E_{\Sigma}$ for the Fourier extension operator defined on Schwartz functions $f \in \mathcal{S}(\R^2)$ by
\begin{equation}
    \label{e:extension_def}
    E_{\Sigma}f(x):= \int_{\Sigma} f(\xi)e^{2\pi i \xi \cdot x} d\mu(\xi), \quad x\in \R^2,
\end{equation}
where the measure $\mu$ is defined as follows. 
When $\Sigma$ is a compact curve, $\mu$ denotes the induced Lebesgue measure (i.e., the surface measure).
On the other hand, when $\Sigma$ is the parabola
\[\mathbb{P}^1 := \{(\xi_1, \xi_1^2): \xi_1\in \R\},\]
due to the connection of the Fourier extension operator with the Schr\"odinger propagator, the measure $\mu$ denotes the pushforward measure from the horizontal variable, i.e.,
\[\int_{\mathbb{P}^1} f(\xi) d\mu(\xi)=\int_{\R}f(\xi_1,\xi_1^2)d\xi_1.\]

Given a curve $\Sigma\subset \R^2$, and $p,q \in [1,\infty]$, the existence of a constant $C>0$ such that the following estimate holds for all Schwartz functions $f\in \mathcal{S}(\R^2)$
    \begin{equation}\label{e:restriction_2d}
        \|E_{\Sigma}f\|_{L^q(\R^2)}
        \leq
        C
        \|f\|_{L^p(\Sigma, d\mu)}
    \end{equation}
is known as the \textit{Restriction theorem for $\Sigma$}.
The problem of determining all pairs of $(p,q)\in [1,\infty]^2$ for which the estimate \eqref{e:restriction_2d} holds was solved up to the endpoints by Fefferman (with attribution also to Stein) \cite{Feff70}, and at the endpoint by Zygmund \cite{Zyg74}. See the section \ref{s:preliminary} for the statement of the theorem. 
%%%%%%%%%%%% End of known results about restriction

To gain information about the geometry of level sets of the function $x \mapsto E_{\Sigma}f(x)$, it is natural to consider weighted estimates.
A classical proposal in this direction is the Mizohata–Takeuchi conjecture, which asserted that for every
$C^2$ curve $\Sigma \subset\R^2$ with surface measure $\mu$ and any nonnegative weight $w:\R^2\to [0,\infty)$, 
\[\int_{\R^2} \abs{E_{\Sigma}f(x)}^2w(x)dx \leq C \norm{f}_{L^2(\Sigma)}^2\norm{Xw}_{L^{\infty}}
\]
holds, 
where $Xw$ denotes the X-ray transform of $w$. This conjecture, however, was recently disproved by Cairo \cite{Cai25}. 
% \begin{conjecture*}[Mizohata--Takeuchi, proved to be false by Cairo \cite{Cai25}]
% Let $\Sigma \subset \R^2$ be any $C^2$ curve equipped with the surface measure $\mu$ and let $w:\R^2\to [0,\infty)$ be a nonnegative weight. Then there exists $C>0$ such that the following inequality is true:
%     \[\int_{\R^2} \abs{E_{\Sigma}f(x)}^2w(x)dx \lesssim\norm{f}_{L^2(\Sigma)}^2\norm{Xw}_{L^{\infty}},\]
%     where $Xw$ denotes the X-ray transform of $w$. 
% \end{conjecture*}
Motivated by this, we study weighted restriction estimates in a related but distinct setting.

When the weight $w$ is the characteristic function of a strip in $\R^2$, the original Mizohata--Takeuchi conjecture becomes trivial, since the right-hand side of the inequality is infinite.
Because of the decay rate of the Fourier transform of the measure $\mu$ in the normal direction to the curve, one can choose a strip for which the left-hand side is also infinite. Thus, one cannot simply replace $\|Xw\|_{L^\infty}$ with a different constant to obtain a nontrivial estimate.
However, if we replace the $L^2$ norms in the conjecture with suitable $L^p$ and $L^q$ norms, we can prove nontrivial weighted estimates of the form 
\[\|E_{\Sigma}f\|_{L^q(\R^2, w(x)dx)}\leq C_{w}\norm{f}_{L^p(\Sigma)}.\]
This certainly holds for all $(p,q)$ in the range where the corresponding Fourier extension estimate (without a weight) holds. On the other hand, because we restrict the domain of integration, it is natural to expect the estimate to hold over a larger range of $(p,q)$. 

By letting the width of the strip shrink to zero in an appropriate sense, we are naturally led to consider Radon transform–type estimates, which capture how the mass of $x \mapsto E_{\Sigma}f(x)$ can concentrate near lines in $\R^2$. 
This line of investigation was initiated by Bennett and Nakamura \cite{BN21}, who studied expressions of the form $\mathcal{R}(\abs{E_{\Sigma}f}^2)$, where $\mathcal{R}$ denotes the Radon transform and $\Sigma$ is a sphere in $d\ge 2$ dimensions.
%The main problem studied in this paper is to determine the range of exponents $(p,q)\in [1,\infty]^2$ for which the weighted estimate holds when the weight $w$ is either the characteristic function of a strip in $\R^2$ or a distribution supported on a line, in which case it equals the Radon transform. Such estimates provide information about the concentration of mass of $x \mapsto E_{\Sigma}f(x)$ around lines in $\R^2$. For instance, Bennett and Nakamura \cite{BN21} considered expressions of the form $\mathcal{R}(\abs{E_{\Sigma}f}^2)$, where $\mathcal{R}$ is the Radon transform and $\Sigma$ is a sphere in $d\ge 2$ dimensions. \textcolor{blue}{
Some results were later extended by Bennett, Nakamura, and the second author to the $k$-plane transform in \cite{BNS24}.
In particular, among other results they showed the following result in two dimensions. 
Denote with $\mathbb{S}^1$ the unit circle centered at $0$
and let $n(\xi)$ denote the normal vector of $\Sigma$ at $\xi$.
For a given curve $\Sigma$, suppose that $\omega\in \mathbb{S}^1$ satisfies \textit{transversality condition}
\begin{equation}\label{e:transversal}
n(\xi)\cdot \omega \neq 0, \quad \text{for all}\quad \xi\in \Sigma, 
% T_\xi \Sigma\cap \langle \omega\rangle 
% =
% \{0\}
\end{equation}
%which we also say $\omega$ is \textit{transversal} to $\Sigma$.
Then,
\begin{equation}\label{e:BN}
\mathcal{R}(|E_\Sigma f|^2)
(\omega,t)
=
% X(|E_\Sigma f|^2)
% (\omega^\perp,t\omega^\perp)
% =
\int_{S}\frac{|g(\xi)|^2}{|n(\xi)\cdot\omega|}\,\d \sigma(\xi)
\end{equation}
holds for all $(\omega,t)\in \mathbb{S}^1\times \R$. Moreover, if there exists a constant $c>0$ such that $|n(\xi)\cdot \omega|\geq c$ for all $\xi\in S$, then a simple computation (see Section~\ref{s:proof_compact}) yields 
\[
\|E_\Sigma f\|_{L^2(\{|x\cdot \omega-t|\leq 1/2\})}
\leq C
\|f\|_{L^2(\Sigma)}.
\]

If the curve $\Sigma$ is compact, then by interpolation and H\"older's inequality, this already provides the full range of estimates.
The problem thus becomes more interesting when the curve is noncompact or when there exists a point $\xi\in \Sigma$ at which the transversality condition \eqref{e:transversal} fails.

We formally state the problems that we will study.

\begin{problem}
\label{prob:strip}
    Given a $C^2$ curve $\Sigma\subset \R^2$ and a vector $\omega \in \mathbb{S}^1$, determine the range of $(p,q)\in [1,\infty]^2$ for which there exists a constant $C=C_{\Sigma, \omega, p,q}>0$ such that the following estimate holds for all Schwartz functions $f\in \mathcal{S}(\R^2)$
    \begin{equation}
        \label{e:resriction_strip}
        \norm{E_{\Sigma}f}_{L^q( \{\abs{x\cdot\omega-t}\leq 1/2\})} \leq C\norm{f}_{L^p(\Sigma)}.
    \end{equation}
\end{problem}
Since we do not have any substantial insights related to the width of the strip, we fix the width to be 1. However, if we let the width tend to $0$ in appropriate way, the estimate \eqref{e:resriction_strip} reduces to a statement regarding the Radon transform $\mathcal{R}$ defined by:
\[\mathcal{R}g(\omega,t):=\int_{\omega \cdot x =t}g(x)d\lambda_{\omega, t}(x),\]
where $(\omega,t)\in (\mathbb{S}^1\times \R)$ and $d\lambda_{\omega,t}$
%$d\lambda_{\omega,t}=\delta(x\cdot \omega -t)$ 
denotes the Lebesgue measure on the line $\{x\in \R^2: x\cdot \omega =t\}$. This leads to the following analogous question regarding the Radon transform.
\begin{problem}
\label{prob:radon}
    Given a $C^2$ curve $\Sigma\subset \R^2$ and a vector $\omega \in \mathbb{S}^1$, determine the range of $(p,q)\in [1,\infty]^{2}$ for which there exists a constant $C=C_{\Sigma, \omega, p,q}>0$ such that the following inequality holds for all Schwartz functions $f\in \mathcal{S}(\R^2)$ uniformly in $t\in \R$
    \begin{equation}
        \label{e:radon_restr}
        \left( \mathcal{R}(\abs{E_{\Sigma}f}^{q})(\omega,t) \right)^{\frac{1}{q}} \leq C\norm{f}_{L^p(\Sigma)}.
    \end{equation}
\end{problem}

When the curve $\Sigma$ is compact, Problems \ref{prob:strip} and \ref{prob:radon} are essentially equivalent and this is the content of the first main result of our paper.

\begin{theorem}
    \label{t:compact}
    Let $\Sigma\subset \R^2$ be a compact $C^2$ curve with nonzero curvature, and let $\omega \in \mathbb{S}^1$.
    \begin{enumerate}[(a)]
        \item (Transversal case) If the condition \eqref{e:transversal} holds, then the estimates \eqref{e:resriction_strip} and \eqref{e:radon_restr} hold if and only if 
        \[\frac{1}{p}+\frac{1}{q}\leq 1 \quad \text{and}\quad  q\ge 2,\]
        that is, if and only if $(\frac{1}{p}, \frac{1}{q})$ lies inside the quadrilateral $\square OACD$ in Figure~\ref{f:pqrangeAllRelevant}.
        \item (Non-transversal case) If the condition \eqref{e:transversal} does not hold, then the estimates \eqref{e:resriction_strip} and \eqref{e:radon_restr} hold if and only if
        \[\begin{cases}
        \frac{1}{p}+\frac{2}{q}\leq 1, \; &\text{if}\; p\leq q\\
        \frac{1}{p}+\frac{2}{q}< 1, &\text{if}\; p > q,
    \end{cases}\]
    that is, if and only if the point $(\frac{1}{p},\frac{1}{q})$ lies in $\triangle OAD \setminus [D,E)$ in Figure~\ref{f:pqrangeAllRelevant}.
    \end{enumerate}
\end{theorem}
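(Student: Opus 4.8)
\emph{Strategy and Case (a).} The plan is to treat the two estimates together, exploiting the identity $\|E_\Sigma f\|_{L^q(\{|x\cdot\omega-t|\le1/2\})}^q=\int_{t-1/2}^{t+1/2}\mathcal R(|E_\Sigma f|^q)(\omega,\tau)\,\d\tau$: a bound for \eqref{e:radon_restr} uniform in $t$ yields \eqref{e:resriction_strip}, and the counterexamples below will violate both simultaneously, so it is enough to prove sufficiency for the Radon estimate and necessity for the strip estimate. Rotate so that $\omega=e_1$, the strip being $\{|x_1-t|\le1/2\}$. For Case (a), sufficiency comes from the two endpoints $(\tfrac1p,\tfrac1q)=(\tfrac12,\tfrac12)$ and $(1,0)$. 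The first is \eqref{e:BN}: transversality gives $|n(\xi)\cdot\omega|\ge c$, so $\mathcal R(|E_\Sigma f|^2)(\omega,t)\le c^{-1}\|f\|_{L^2(\Sigma)}^2$ uniformly in $t$, hence also $\|E_\Sigma f\|_{L^2(\{|x_1-t|\le1/2\})}\le C\|f\|_{L^2(\Sigma)}$. The second is trivial: $\|E_\Sigma f\|_{L^\infty}\le\|f\|_{L^1(\Sigma)}$. Interpolating (Riesz--Thorin, uniformly in $t$) gives the edge $\{\tfrac1p+\tfrac1q=1,\,q\ge2\}$, and H\"older in $f$ (legitimate since $\Sigma$ is compact) fills $\square OACD=\{\tfrac1p+\tfrac1q\le1,\,q\ge2\}$. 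For necessity: (i) a bump $f_\delta$ on an arc of length $\delta$ has extension of size $\sim\delta$ on a $1\times\delta^{-1}$ subset of the strip (the quadratic part of the phase being negligible at width $1$ in $x_1$), while $\|f_\delta\|_{L^p(\Sigma)}\sim\delta^{1/p}$, so the ratio $\delta^{1-1/q-1/p}$ forces $\tfrac1p+\tfrac1q\le1$; (ii) for $h=\sum_{k=1}^N r_k\one_{\mathrm{arc}_k}$ with Rademacher signs $r_k$ and $\{\mathrm{arc}_k\}$ an equipartition of $\Sigma$, Khintchine together with transversality gives $\mathbb E\,\mathcal R(|E_\Sigma h|^q)(\omega,0)\sim N^{1-q/2}$ while $\|h\|_{L^p(\Sigma)}\sim1$, forcing $q\ge2$.

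\emph{Case (b): reduction.} Nonzero curvature makes $\{\xi:n(\xi)\cdot\omega=0\}$ finite with each zero nondegenerate. Using a partition of unity, the part of $\Sigma$ away from these points is transversal with a uniform lower bound, so Case (a) applies; its range $\{\tfrac1p+\tfrac1q\le1,\,q\ge2\}$ contains the target $\{\tfrac1p+\tfrac2q\le1\}$. Near a tangency $\xi_0$, an affine change of variables models $\Sigma$ by a short arc of $\{(\xi_1,\xi_1^2):|\xi_1|\le\delta\}$ with $\omega=e_1$ the tangent at the vertex (cubic errors and Jacobians being harmless perturbations). Writing $g(\xi_1)=f(\xi_1,\xi_1^2)e^{2\pi i\xi_1 x_1}\one_{[-\delta,\delta]}$ and substituting $\zeta=\xi_1^2$ expresses $x_2\mapsto E_\Sigma f(x_1,x_2)$ as the Fourier transform of $G_{x_1}(\zeta)=\tfrac{g(\sqrt\zeta)+g(-\sqrt\zeta)}{2\sqrt\zeta}\one_{(0,\delta^2)}$, and a short computation gives $\big\|\,|\zeta|^{1/(2p')}G_{x_1}\big\|_{L^p}\lesssim\|f\|_{L^p(\Sigma)}$ uniformly in $x_1$, the exponent $\tfrac1{2p'}$ exactly absorbing the Jacobian of $\zeta=\xi_1^2$. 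Thus the bad arc is governed by the one-dimensional weighted Fourier inequality
\[
\|\widehat G\|_{L^q(\R)}\le C\,\big\|\,|\zeta|^{1/(2p')}G\big\|_{L^p(\R)},\qquad \supp G\subset[-1,1].
\]

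\emph{Case (b): sufficiency.} On the line $\tfrac1p+\tfrac2q=1$ the weight exponent $\tfrac1{2p'}$ equals $1-\tfrac1p-\tfrac1q$, and for $p\le q$ (so $p\le3$) this is precisely \emph{Pitt's inequality} with no weight on $\widehat G$ (classically Hardy--Littlewood); its side conditions $0<1-\tfrac1p-\tfrac1q<\tfrac1{p'}$ hold automatically there. When $\tfrac1p+\tfrac2q<1$ the required weight is weaker than the scaling-critical one, which is the easy (subcritical) case: decompose $G=\sum_{k\ge0}G\one_{\{|\zeta|\sim2^{-k}\}}$, rescale each dyadic piece to unit scale (it is then supported away from $0$, hence transversal, so Case (a)'s one-dimensional estimate applies), obtain $\|\widehat{G\one_{\{|\zeta|\sim2^{-k}\}}}\|_{L^q}\lesssim2^{\frac k2(\frac1p+\frac2q-1)}\big\|\,|\zeta|^{1/(2p')}G\one_{\{|\zeta|\sim2^{-k}\}}\big\|_{L^p}$, and sum via H\"older in $k$ (negative exponent). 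Together with the trivial $q=\infty$ bound and the good part, this gives sufficiency on exactly $\triangle OAD\setminus[D,E)$; the strip estimate then follows from the identity above.

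\emph{Case (b): necessity and the main obstacle.} The bump $f_\delta$ on an arc of length $\delta$ about $\xi_0$ now has extension $\sim\delta$ on a $1\times\delta^{-2}$ subset of the strip (the arc occupying a $\delta\times\delta^2$ box), so the ratio $\delta^{1-2/q-1/p}$ forces $\tfrac1p+\tfrac2q\le1$; at $D$ one has $E_\Sigma f_\delta\sim|x_2|^{-1/2}\notin L^2(\mathrm{strip})$, excluding $D$; and along the rest of $[D,E)$, where $p>q$, the multiscale test function $f=\sum_{j=1}^J 2^{j/p}\psi_j$, with $\psi_j\ge0$ a unit bump on the arc where $|\xi-\xi_0|\sim2^{-j}$, has contributions $E_\Sigma\psi_j$ that add coherently on the dyadic shells $\{|x_2|\sim2^{2k}\}\cap\mathrm{strip}$, giving $\|E_\Sigma f\|_{L^q(\mathrm{strip})}\gtrsim J^{1/q}$ against $\|f\|_{L^p(\Sigma)}\sim J^{1/p}$, so $J^{1/q-1/p}\to\infty$. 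I expect the hard part to be the edge $\{\tfrac1p+\tfrac2q=1,\,p\le q\}$, in particular the point $E=(\tfrac13,\tfrac13)$: absolute summation only reaches the open region, and a Littlewood--Paley square-function argument (the pieces having disjoint frequency supports) combined with Minkowski reaches the edge only for $p\le2$, i.e.\ $q\ge4$, so the portion $q\in[3,4)$ genuinely requires the sharp Hardy--Littlewood/Pitt inequality — and the crux of the argument is the realization that the substitution $\zeta=\xi_1^2$ turns the problem into exactly that.
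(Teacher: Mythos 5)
Your proposal is correct in its overall architecture and, for the most part, matches the paper's: both reduce the strip estimate to the Radon estimate by integration in $t$, both settle the transversal case by interpolating the $L^2$ Plancherel/Radon identity \eqref{e:BN} with the trivial $L^1\to L^\infty$ bound plus H\"older, and both reduce the non-transversal case to a one-dimensional weighted Fourier inequality via the substitution $\zeta=h(\xi_1)$ (the paper phrases this as $h(u)=y^2$ and an oscillatory operator $T_{\low}$, but the Jacobian bookkeeping $h(u)/h'(u)^2\lesssim 1$ is the same computation as your weight $|\zeta|^{1/(2p')}$). Your identification of the crux --- that on the critical line $\tfrac1p+\tfrac2q=1$ the problem is exactly the Hardy--Littlewood/Pitt inequality --- is precisely what the paper does: its Lemma~\ref{l:PPS} (Pan--Sampson--Szeptycki) is the dual special case of Pitt, proved by Marcinkiewicz interpolation, applied at the single endpoint $p=q=3$ and then interpolated with $L^1\to L^\infty$ to cover $[A,E]$; invoking Pitt wholesale, as you do, is equivalent. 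In the subcritical region you rescale dyadic pieces and sum a geometric series where the paper uses H\"older and Hausdorff--Young in the dual formulation; both work. Your necessity arguments (Knapp for the two scaling lines, Khintchine plus overlapping tubes for $q\ge2$, stationary phase on the constant function to exclude $D$) also match the paper's, and your observation that disproving the strip estimate suffices for both is the right logical reduction. One small imprecision: for a general curve you cannot literally discard the cubic error in the phase (since $x_2$ is unbounded); the correct statement is that the substitution $\zeta=h(\xi_1)$ itself produces the weight $\sim\zeta^{-1/2}$ by nondegeneracy of $h''$, which is how the paper handles it.

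The one place where your sketch is genuinely thinner than a proof is the necessity of $p\le q$ on the critical line. Your lacunary superposition $f=\sum_j 2^{j/p}\psi_j$ is the discrete analogue of the paper's approximation $f_\eps$ of the homogeneous distribution $|\xi|^{-1/p}$, and the exponent count ($J^{1/q}$ versus $J^{1/p}$) is right. But the claim that the pieces ``add coherently on the dyadic shells $\{|x_2|\sim 2^{2k}\}$'' hides the transition scales $2^{2j}\sim|x_2|$: there the $O(1)$ many terms are of the same order $2^{-k/p'}$ as the coherent tail $\sum_{j\ge k+C}2^{-j/p'}$ but with uncontrolled phase, so cancellation is not excluded without further argument (sparsifying the scales, or an exact computation). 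This is exactly the difficulty the paper resolves with its Gaussian superposition and the steepest-descent contour estimates \eqref{e:int_gamma1}--\eqref{e:int_gamma2}, which yield the clean pointwise lower bound $|E_hf_\eps(x_1,x_2)|\gtrsim|x_2|^{-1/q}$ on the whole strip $|x_1|\le\delta$. Your construction does have the advantage that the $x_1$-dependence is automatically harmless (each bump's dual box has width $2^j\ge1$ in $x_1$), whereas the paper needs an extra perturbative argument for $x_1\ne0$; but the coherence step should be justified before the counterexample is complete.
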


% \begin{theorem}
% \label{t:compact}
%     Let $\Sigma\subset \R^2$ be a compact $C^2$ curve with nonzero curvature, and let $\omega \in \mathbb{S}^1$. The estimates \eqref{e:resriction_strip} and \eqref{e:radon_restr} hold if
%     \[\begin{cases}
%         \frac{1}{p}+\frac{2}{q}\leq 1, \quad &\text{if}\; p\leq q\\
%         \frac{1}{p}+\frac{2}{q}< 1, &\text{if}\; p > q,
%     \end{cases}\]
%     that is, if and only if the point $(\frac{1}{p},\frac{1}{q})$ lies in $\triangle OAD \setminus [D,E)$ in Figure \ref{f:pqrangeAllRelevant}.

%     More precisely, if there exists some $\xi \in \Sigma$ at which the transversality condition \eqref{e:transversal} does not hold,
%     then this range is optimal for both \eqref{e:resriction_strip} and \eqref{e:radon_restr}. Otherwise, the estimates \eqref{e:resriction_strip} and \eqref{e:radon_restr} hold if and only if $(\frac{1}{p}, \frac{1}{q})$ lies inside the quadrilateral $\square OACD$ in Figure~\ref{f:pqrangeAllRelevant}.
% \end{theorem}

%\subsection{Parabola case}
When the curve $\Sigma$ is parabola $\mathbb{P}^1$,
there is only one direction, $\omega= (0,\pm 1)$, that satisfies transversality condition \eqref{e:transversal}.
We first note the connection with estimates for Schr\"odinger operator.

For $u_0\in \mathcal{S}(\R)$, the solution of the Schr\"odinger equation
\[\begin{cases}
    2\pi i \partial_{x_2} u = \partial_{x_1}^2 u\\
    u(x_1,0)= u_0(x_1)
\end{cases}\]
is given as
%$u(x_1,x_2)=\int_{\R}\widehat{u_0}(\xi_1)e^{2\pi i (\xi_1, \xi_1^2)\cdot(x_1,x_2)}d\xi_1$. 
%One can observe that
\[u(x)=E_{\mathbb{P}^1}v_0(x),\] 
where $v_0\in \mathcal{S}(\R^2)$ is defined as
$v_0(\xi_1,\xi_2):=\widehat{u_0}(\xi_1)\chi(\xi_2-\xi_1^2)$ for some $\chi\in C_c^{\infty}(\R)$ with $\chi(0)=1$.
%where $v_0$ is a "lifting" of the function $\widehat{u_0}$ to the parabola with the mapping $\xi_1 \mapsto (\xi_1,\xi_1^2)$. In order to completely satisfy assumptions of the definition \eqref{e:extension_def}, one can consider a Schwartz function $v_0(\xi_1,\xi_2):=\widehat{u_0}(\xi_1)\chi(\xi_2-\xi_1^2)$ for some $\chi\in C_c^{\infty}(\R)$ with $\chi(0)=1$.

For interval $I\subset \R$, the estimates of the form 
\[\norm{u}_{L^q(\R\times I)} \lesssim \norm{u_0}_{L^p(\R)}\]
are classically considered in the theory of partial differential equations. See \cite{Rogers08, GOW22, GaoLiWang22, GaoMiaoZheng22,Schippa22, Lu23}.
It is, therefore, a natural question to consider the analogous results in which the function $u$ is replaced with $E_{\mathbb{P}^1}v_0$ and the function $u_0$ is replaced by $v_0^\vee$.

The results of that kind are the second main result of our paper.

\begin{theorem}
\label{t:parabola}
    Let $\Sigma = \mathbb{P}^1$. 
    \begin{enumerate}[(a)]
        \item (Transversal case) When $\omega = (0,\pm 1)$, the estimate \eqref{e:radon_restr} holds if and only if 
        \[\frac{1}{p}+\frac{1}{q}=1 \quad \text{and} \quad p\leq q.\]
        The estimate \eqref{e:resriction_strip} holds if and only if 
        \[\frac{1}{p}+\frac{1}{q}\leq 1, \quad \frac{1}{p}+\frac{3}{q}\ge 1, \quad\text{and}\quad (p<q \;\text{or}\; p=q=2),\]
        that is, if and only if the point $(\frac1p,\frac1q)$ belongs to $\triangle BAC\setminus[B,C)$ in Figure \ref{f:pqrangeAllRelevant}.
        \item (Non-transversal case) When $\omega\neq (0,\pm 1)$, the estimate \eqref{e:radon_restr} holds if and only if
        \[\frac{1}{p}+\frac{2}{q}=1\quad \text{and}\quad p\leq q,\]
        that is, if the point $(\frac{1}{p},\frac{1}{q})$ belongs to $[AE]$ in Figure~\ref{f:pqrangeAllRelevant}.
        The estimate \eqref{e:resriction_strip} holds if
        \[ \begin{cases}
            \frac{1}{p}+\frac{3}{q}\ge 1,\quad \frac{1}{p}+\frac{2}{q}\leq 1, \quad (p,q)\neq (4,4), \quad &\text{if}\; p\leq q\\
            \frac{1}{p}+\frac{1}{q}> \frac{1}{2},\quad  \frac{1}{p}+\frac{2}{q}< 1, &\text{if}\; p>q,
            \end{cases}
         \]
        that is, when $(\frac1p,\frac1q)$ belongs to $\triangle BAD \setminus ([B,D]\cup [D,E) )$.
        The estimate \eqref{e:resriction_strip} \underline{does not} hold in the complement of the given range except maybe in the interior of the segment $(B,D)$.
    \end{enumerate}
\end{theorem}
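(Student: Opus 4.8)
The plan is to treat the transversal and non-transversal cases separately, and within each case to split the work into the Radon-transform estimate \eqref{e:radon_restr} (the cleaner of the two) and the strip estimate \eqref{e:resriction_strip}, and to further divide each into positive results (the bounds) and necessity (counterexamples). For the transversal direction $\omega=(0,\pm1)$, I would start from the identity \eqref{e:BN}: since $n(\xi)\cdot\omega$ is bounded below away from zero on any compact piece, but the parabola is noncompact and $|n(\xi)\cdot\omega|\to 1$ as $|\xi_1|\to\infty$ is actually \emph{not} an issue here (the normal becomes vertical only in the limit; in fact for $\mathbb{P}^1$ with $\omega=(0,1)$ one has $n(\xi)\cdot\omega = (1+4\xi_1^2)^{-1/2}\cdot(\pm1)$, which degenerates as $|\xi_1|\to\infty$). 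So the weight $|n(\xi)\cdot\omega|^{-1}$ in \eqref{e:BN} grows like $\langle\xi_1\rangle$, and after the usual change of variables from surface measure $d\sigma$ to $d\xi_1$ on the parabola the $\mathcal{R}(|E_{\mathbb{P}^1}f|^2)$ identity becomes, up to constants, $\int_\R |g(\xi_1)|^2\,d\xi_1$ where $g$ is $f$ pulled back — i.e. exactly $\|f\|_{L^2(\mathbb{P}^1)}^2$, giving \eqref{e:radon_restr} at $(p,q)=(2,2)$ with \emph{equality} in the constant. Combining this single sharp $L^2$ estimate with the trivial $L^1\to L^\infty$-type bound $\|E_{\mathbb{P}^1}f\|_{L^\infty}\le\|f\|_{L^1}$ and interpolating along the Radon transform (which, being an $L^q$-averaging operator along a line, interpolates cleanly in $q$) should produce exactly the line $\frac1p+\frac1q=1$, $p\le q$; the failure off this line follows by testing on modulated bumps $f=e^{2\pi i \xi_1 a}\mathbf 1_{[0,1]}$ translated to infinity (which shows $\frac1p+\frac1q\le 1$ is necessary) and on wide-frequency data (which shows the reverse).

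For the strip estimate \eqref{e:resriction_strip} in the transversal case, the strip $\{|x_2-t|\le1/2\}$ is a horizontal slab, so $E_{\mathbb{P}^1}f$ restricted to it is, via the Schrödinger dictionary recalled in the excerpt, essentially $u(x_1,x_2)$ for $x_2$ in a bounded interval; the estimate \eqref{e:resriction_strip} is then precisely a local (in time) smoothing/Strichartz estimate $\|u\|_{L^q(\R\times I)}\lesssim\|u_0\|_{L^p}$ after trading $\widehat{u_0}$ for $f$. I would import the sharp range of such estimates from the PDE literature cited (\cite{Rogers08,GOW22,GaoLiWang22,GaoMiaoZheng22,Schippa22,Lu23}): the endpoint line $\frac1p+\frac1q=1$ gives the fixed-time restriction estimate, the condition $\frac1p+\frac3q\ge1$ is the local smoothing / Strichartz scaling threshold, and the exclusion of the open segment $[B,C)$ together with the degenerate point $p=q=2$ being allowed is the known behavior at the corner. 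The necessity of $\frac1p+\frac3q\ge1$ comes from a focusing (Knapp-type) example: take $f$ a bump of frequency-width $\delta$, so $E_{\mathbb{P}^1}f$ concentrates on a $\delta\times\delta^2$ tube, and computing both sides over the slab yields the scaling obstruction; necessity of $\frac1p+\frac1q\le1$ is the translate-to-infinity example again, and the corner exclusions come from logarithmic divergences in the standard examples.

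For the non-transversal case $\omega\ne(0,\pm1)$, the key new phenomenon is that the line in direction $\omega^\perp$ is transverse to the parabola at exactly the points where the tangent is parallel to $\omega^\perp$, and near such a point the phase $\xi\cdot x$ restricted to the line has a nondegenerate \emph{stationary point} in $\xi_1$, so the Radon transform $\mathcal R(|E_{\mathbb{P}^1}f|^q)(\omega,t)$ picks up a $\frac1{q}$-power of a one-dimensional stationary-phase gain — morally one wins a factor of $\langle\text{frequency}\rangle^{-1/2}$ per factor of $E_{\mathbb{P}^1}f$, which shifts the Radon line from $\frac1p+\frac1q=1$ to $\frac1p+\frac2q=1$. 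I would make this precise by changing variables so that the line $\{x\cdot\omega=t\}$ becomes a copy of $\R$ parametrized by $s$, writing $E_{\mathbb{P}^1}f$ on it as an oscillatory integral in $(\xi_1,s)$, and then using a one-dimensional $L^q$ decoupling / interpolation between the easy $L^\infty$ bound and an $L^2$ bound on the line that now costs a derivative (this $L^2$-on-a-transverse-line bound is where the parabola's curvature enters and is the crux). For the strip estimate in this case the extra region is a non-horizontal slab; I expect the proof to reduce, after an affine change of variables that straightens $\omega$ to vertical (which turns the parabola into a rotated parabola, still with nonzero curvature but now noncompact in a tilted way), to a combination of the transversal strip estimate away from the degenerate frequency and a localized stationary-phase analysis near it, yielding the stated $\triangle BAD$ range with the corner $(4,4)$ and the segments $[B,D]$, $[D,E)$ excluded by logarithmic and genuine counterexamples; the honest gap in the interior of $(B,D)$ I would simply flag as open, as the statement does. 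The main obstacle throughout is the non-transversal strip estimate: one must carefully interpolate a local-smoothing-type bound (scaling line $\frac1p+\frac3q=1$) against the stationary-phase-improved transverse bound ($\frac1p+\frac2q=1$) while controlling the contribution of the frequency region near the degenerate tangent direction, and getting the corners exactly right — especially disentangling which of the boundary segments genuinely carry an estimate — is where the delicate examples (Knapp focusing at the bad frequency, translation to spatial infinity, and frequency-dilation) all have to be pushed to their limits.
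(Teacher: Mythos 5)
Your proposal gets the overall architecture right (split by transversality, split Radon vs.\ strip, Knapp examples for scaling necessities, interpolation with the restriction theorem), and the transversal Radon case is indeed essentially what the paper does: for $\omega=(0,\pm1)$ the operator $f\mapsto E_{\mathbb{P}^1}f(\cdot,t)$ is literally the Fourier transform of $e^{2\pi i\xi^2 t}f$, so the full answer is Hausdorff--Young together with its known sharpness. However, there are several concrete gaps. First, you cannot ``import'' the transversal strip estimate from the local smoothing/Strichartz literature: those results bound $\|u\|_{L^q(\R\times I)}$ by $\|u_0\|_{L^p}$, i.e.\ by $\|f^\vee\|_{L^p}$ where $f=\widehat{u_0}$ is the density on the parabola, whereas \eqref{e:resriction_strip} requires $\|f\|_{L^p}$ on the right; these two problems coincide only at $p=2$ and have different sharp ranges (this is exactly why the paper treats its version as a new result). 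The paper instead proves the $L^2\to L^2$ strip bound by Plancherel in $x_1$ for each fixed $x_2\in I$ and interpolates with the two-dimensional restriction theorem (Theorem~\ref{t:restriction_2d}(b)) to fill the triangle $BAC$. Second, the exclusion of $[B,C)$, i.e.\ the failure at $p=q\in(2,4]$, is not a ``corner'' phenomenon and does not come from a logarithmic divergence of a single standard example: it is proved by a Besicovitch-type construction (a Perron tree packing $\sim J$ tubes of dimensions $J\times J^{-1}$ in distinct directions into a set of measure $o(J)$), combined with Khintchine's inequality and duality, following \cite{BCSS89}. Your sketch contains no mechanism that could rule out, say, $p=q=3$.

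Third, in the non-transversal Radon case the heuristic ``stationary phase gains $\langle\mathrm{freq}\rangle^{-1/2}$, shifting the line to $\frac1p+\frac2q=1$'' does not by itself yield a proof. After straightening, the operator on the line is $Tf(x)=\int_0^\infty e^{2\pi i x\xi^2}f(\xi)\,\d\xi$, and the crux is the endpoint $L^3\to L^3$ bound (the point $E$), which the paper obtains by dualizing, substituting $\xi^2=u$, and applying the weighted Fourier inequality $\int|\widehat g(x)|^p|x|^{p-2}\d x\lesssim\|g\|_p^p$ of \cite{PSS97} (Lemma~\ref{l:PPS}); your proposed ``$L^2$ bound on the line that costs a derivative'' naturally produces only a weighted $L^2$ estimate with weight $\xi^{-1}$, which does not interpolate to the unweighted $L^3$ endpoint. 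Even harder, and entirely missing from your plan, is the necessity of $p\le q$ on the scaling line: since bump functions cannot detect it, the paper tests $T$ on smoothed truncations of the homogeneous profile $|\xi|^{-1/p}$ and extracts a genuine \emph{lower} bound $|Tf_\eps(x)|\gtrsim x^{-1/q}$ on $[1,\eps^{-2}/4]$ via an exact Gaussian identity (and, for the localized variants, a steepest-descent contour argument). Finally, for the non-transversal strip estimate the paper's positive result comes from a low/high frequency decomposition $E=E_\low+E_\high$, bounding $E_\low$ by the compact-curve theorem and $E_\high$ by Proposition~\ref{p:Thigh} integrated over the strip plus interpolation with restriction, and the exclusion of $(4,4)$ requires an exact computation with the Gaussian $f_\eps(\xi)=e^{-\pi\eps\xi^2}$ exhibiting a $\log\eps^{-1}$ divergence; none of these mechanisms appear in your outline.
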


Observe the essential difference between this theorem and Theorem~\ref{t:compact}, where the ranges of the estimates \eqref{e:resriction_strip} and \eqref{e:radon_restr} coincide.
Let us briefly explain the reason. Restricting the space of integration to a single strip imposes a preferred scale. In the case of a compact curve, there are only finitely many wave packets at this scale, so the main obstructions arise from a single wave packet. 
In contrast, for the parabola, there are infinitely many wave packets at the same scale. 
This allows us to exploit the fact that a strip, although essentially one-dimensional, has positive width, enabling us to arrange the essential supports of wave packets into interesting geometric configurations, such as a Besicovitch-type set, within the strip.

Finally, we note that, even though it is natural to think about the given problem in terms of wave packets, the standard modern techniques (for example, those in \cite{Guth16, Wang22}) that use the $\varepsilon$-removal lemma from \cite{Tao99} to reduce the problem to a bounded region and then apply induction on scales do not seem to be well suited to this problem. 
Indeed, the full range of $(p,q)$ lies outside the critical line along which one can reduce the problem for the full parabola to a compact subset. Moreover, in the case of the full parabola, the wave packets associated with high frequencies are longer and thinner, so the induction process is also not straightforward.

%Moreover, although the two-dimensional Fourier restriction theorem is an essential ingredient in the case of the parabola, we do not see how to modify its proof to cover the full range, since all known proofs rely, explicitly or implicitly, on the identity $\|F\|_{L^q}=\|F^2\|_{L^{\frac{q}{2}}}^{1/2}$ with $q/2 > 2$, whereas our estimate holds for some $q \leq 4$. Therefore, the study of the problem along the single line seems to be an essential part of the proof.

%-----------------------------------------
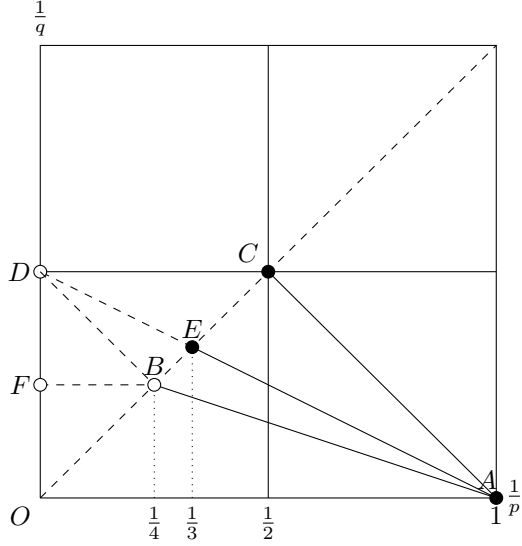
\begin{figure}
\begin{tikzpicture}[scale=6]

%\fill [blue,opacity=0.2] (1,0)--(0,1/2)--(0,0);

\draw (0,0)-- (1,0)--(1,1)--(0,1)--(0,0);
\draw [name path=half](1/2,0)--(1/2,1);
\draw [name path=diag, dashed](0,0)--(1,1);
\draw (0,1/2)--(1,1/2);

\node [right]at (1,0) {$\frac1p$};
\node [above]at (0,1) {$\frac1q$};

\draw (1,0)--(1/2,1/2);
\draw [name path=critical,dashed] (1/3,1/3)--(0,1/2);
\draw (1,0)--(1/3,1/3);

\path [name intersections={of=diag and critical, by={restriction}}];
 
\fill (1,0) circle (0.4pt);
\draw (1,0) circle (0.4pt);

\fill [white] (0,1/2) circle (0.4pt);
\draw (0,1/2) circle (0.4pt);

\fill [black] (restriction) circle (0.4pt);
\draw (restriction) circle (0.4pt);

% \draw (0,1/4+1/12)--(1,0);
\draw (1,0)--(1/4,1/4);
\draw [dashed] (0,1/4)--(1/4,1/4);
\draw [dashed] (0,1/2)--(1/4,1/4);
\draw [dotted] (1/4,0)--(1/4,1/4);
\draw [dotted] (1/3,0)--(1/3,1/3);

\fill [white](1/4,1/4) circle (0.4pt);
%\fill [blue,opacity=0.2](1/4,1/4) circle (0.4pt);
\draw (1/4,1/4) circle (0.4pt);

\node [below left] at (0,0) {$O$};

\node [below] at (1/2,0) {$\frac12$};
\node [below] at (1,0) {$1$};
\node [below] at (1/4,0) {$\frac14$};
\node [below] at (1/3,0) {$\frac13$};

\node [above] at (1-0.02,0) {$A$};
\node [above ] at (1/4,1/4) {$B$};
% \node [above left] at (1/2,1/2) {$C$};
\node [left] at (0,1/2) {$D$};
\node [above] at (restriction) {$E$};

\fill [white](0,1/4) circle (0.4pt);
\draw (0,1/4) circle (0.4pt);
\node [left] at (0,1/4) {$F$};

\node [above left] at (1/2,1/2) {$C$};
\fill (1/2,1/2) circle (0.4pt);
\draw (1/2,1/2) circle (0.4pt);

\end{tikzpicture}
\caption{Relevant points}

\label{f:pqrangeAllRelevant}

\end{figure}
%-----------------------------------------

\subsection*{Structure of the paper}
In Section~\ref{s:preliminary}, we recall the known results that will be used to prove the main theorems. 
In Section~\ref{s:oscillatory}, we establish bounds for one-dimensional oscillatory integrals corresponding to $x_2\mapsto \mathcal{R}(|E_{\mathbb{P}^1}f|^q)(0,x_2)$. 
The main tool for the upper bounds is a weighted estimate for the Fourier transform proved by Pan et al. in \cite{PSS97}, while the proofs of necessity rely on Knapp-type examples and smooth approximations of homogeneous distributions, following the approach of the first author and Kova\v{c} in \cite{BK23}.
In Section~\ref{s:proof_compact}, we reduce the upper bounds of Theorem~\ref{t:compact} to the oscillatory integral bounds from Section~\ref{s:oscillatory} and extend the proofs of the necessary conditions from Section~\ref{s:oscillatory}. 
Finally, in Section~\ref{s:proof_parabola}, we prove the upper bounds in Theorem~\ref{t:parabola} by interpolating between the oscillatory integral bounds from Section~\ref{s:oscillatory} and the restriction theorem in two dimensions. To establish necessity, we use Knapp-type examples in combination with Besicovitch set estimates, following an approach similar to that of Beckner et al.~\cite{BCSS89}.

A considerable amount of the paper is devoted to intricate estimates of oscillatory integrals, but we emphasize that this approach, given the positive results obtained, appears to be essential.

\subsection*{Notation}
\label{s:notation}
If $A$, and $B$ are some expressions, then for some (possibly empty) set of indices, $P$, the expression $A\lesssim_P B$ means that there is a constant $C>0$ depending on $P$ such that $A\leq CB$. 
For a function $f:\R\to\C$ and a point $x_0\in \R$, we denote by $\tau_{x_0}$ the translation operator defined by $\tau_{x_0}f(x):=f(x-x_0)$. For a set $A$ we write $1_A$ for its characteristic function. For $r>0$ and $x_0\in \R^2$ we denote by $B(x_0,r)$ the ball of radius $r$ around $x_0$.
The unspecified constants $C>0$ in the proofs may vary from line to line.

\section{Preliminaries}
\label{s:preliminary}
We recall the known results that will be used in the proof.
\subsection{Two-dimensional restriction theorem}
We state the version of the restriction theorem that can be quickly recovered from the Corollary 1 of Section 5, Chapter IX, in Stein's book \cite{Stein93book}. We refer the interested reader also to Tao's lecture notes \cite{TaoRestNotes} for modern treatment.

\begin{theorem}[2-dimensional restriction theorem]
\label{t:restriction_2d}
\begin{enumerate}[(a)]
        \item Let $\Sigma\subset \R^2$ be a compact curve whose curvature is nowhere zero, equipped with the described measure $\mu$. Then the estimate \eqref{e:restriction_2d} holds for $p,q \in [1,\infty]$ if and only if $p^{-1}+3q^{-1}\leq 1$ and $q>4$, i.e., when $(p^{-1},q^{-1})$ lies inside the quadrilateral $\square OABF$ in the Figure~\ref{f:pqrangeAllRelevant}.
        \item When $\Sigma = \mathbb{P}^1$ with the described measure $\mu$, the estimate \eqref{e:restriction_2d} holds if and only if $p^{-1}+3q^{-1}=1$ and $q>4$, i.e., when $(p^{-1},q^{-1})$ lies on the segment $[AB)$ in Figure~\ref{f:pqrangeAllRelevant}.
    \end{enumerate}
\end{theorem}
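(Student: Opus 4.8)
\noindent The statement is classical (Fefferman, Stein, Zygmund; Carleson--Sj\"olin); here is the strategy I would follow, separating necessity from sufficiency and reducing both parts to a single estimate for a compact arc of nonvanishing curvature.

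\noindent\textbf{Necessity.} Two test inputs govern the two constraints. The Knapp example: if $C_\delta\subset\Sigma$ is an arc of length $\delta$ and $f=\one_{C_\delta}$, then nonvanishing curvature makes $\xi\cdot x$ vary by $O(1)$ over $\xi\in C_\delta$ whenever $x$ lies in the dual rectangle $R_\delta$ of dimensions $\sim\delta^{-1}$ (tangentially) and $\sim\delta^{-2}$ (normally); hence $|E_\Sigma f|\gtrsim\mu(C_\delta)\sim\delta$ on $R_\delta$, so $\|E_\Sigma f\|_{L^q(\R^2)}\gtrsim\delta\,|R_\delta|^{1/q}\sim\delta^{\,1-3/q}$ while $\|f\|_{L^p(\Sigma)}=\delta^{1/p}$, and letting $\delta\to0$ forces $p^{-1}+3q^{-1}\le1$. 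For $\mathbb{P}^1$, the parabolic dilation $f\mapsto f(\lambda\,\cdot\,)$ rescales $\|E_{\mathbb{P}^1}\cdot\|_{L^q}$ by $\lambda^{3/q-1}$ and $\|\cdot\|_{L^p(\mathbb{P}^1)}$ by $\lambda^{-1/p}$, so the estimate can hold only when $3q^{-1}-1=-p^{-1}$, i.e. with the equality $p^{-1}+3q^{-1}=1$. The decay example: for $f\equiv1$ on a fixed compact arc, two-dimensional stationary phase gives $|E_\Sigma f(x)|=|\widehat{d\mu}(x)|\sim|x|^{-1/2}$ for $x$ in a cone of directions of positive measure (those normal to $\Sigma$ at some point), so $\|E_\Sigma f\|_{L^q(\R^2)}^q\gtrsim\int_{|x|\ge1}|x|^{-q/2}\,dx$, finite only when $q>4$; for $\mathbb{P}^1$ one runs the same computation with $f=\one_{\{|\xi_1|\le1\}}$, using stationary phase on the sector $\{|x_1|\lesssim|x_2|\}$.

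\noindent\textbf{Reduction to a core estimate.} If $\Sigma$ is compact then $\mu$ is finite, so $\|f\|_{L^{p_0}(\Sigma)}\lesssim\|f\|_{L^{p}(\Sigma)}$ whenever $p_0\le p$; hence validity of \eqref{e:restriction_2d} at $(p_0,q)$ implies validity at every $(p,q)$ with $p\ge p_0$. Under this monotonicity in $1/p$ the whole region $\{p^{-1}+3q^{-1}\le1,\ q>4\}$ is swept out from its right boundary, which is exactly the critical segment $p^{-1}+3q^{-1}=1$, $4<q\le\infty$ (from $A$, included, to $B$, excluded). So part~(a) reduces to \eqref{e:restriction_2d} on that segment. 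For part~(b), parabolic scaling — which simultaneously forces the exponent equality — reduces the (now scale-invariant) estimate on the critical segment to its version for $f$ supported on the unit arc $\{|\xi_1|\le1\}$ of $\mathbb{P}^1$, which after an affine change of variable is a compact curve of nonvanishing curvature; thus (b) follows from the core estimate together with a routine localization step.

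\noindent\textbf{The core estimate and the main obstacle.} On the critical segment, the endpoint $(p,q)=(1,\infty)$ is the trivial bound $\|E_\Sigma f\|_\infty\le\|f\|_{L^1(\Sigma)}$, and the subrange $q\ge6$ follows by interpolating it with the Stein--Tomas endpoint $E_\Sigma\colon L^2(\Sigma)\to L^6(\R^2)$; the latter is proved by $TT^*$, since $E_\Sigma E_\Sigma^*$ is convolution against a kernel $\widehat{d\nu}$ with $|\widehat{d\nu}(x)|\lesssim(1+|x|)^{-1/2}$ (the bare weak-type Young inequality gives only $L^2\to L^8$ here, so one invokes the usual dyadic refinement exploiting the oscillation $e^{2\pi i|x|}$ in the stationary-phase expansion). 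The remaining subrange $4<q<6$, equivalently $2<p<4$, is the genuinely difficult one and coincides with the Carleson--Sj\"olin/H\"ormander oscillatory-integral estimate — Corollary~1 of Section~5, Chapter~IX in \cite{Stein93book}: decomposing $\R^2$ into dyadic annuli $\{|x|\sim R\}$ together with the dual decomposition of $\Sigma$ into $\sim R^{-1/2}$ arcs, and rescaling each block to unit scale, one reduces to $\|T_\lambda g\|_{L^q(\R^2)}\lesssim\lambda^{-2/q}\|g\|_{L^p}$ for oscillatory operators $T_\lambda g(x)=\int e^{i\lambda\phi(x,y)}\psi(x,y)g(y)\,dy$ at frequency $\lambda\sim R$ whose phase satisfies the Carleson--Sj\"olin nondegeneracy (equivalent to nonvanishing curvature), and $q>4$ is exactly what makes the ensuing sum over annuli converge. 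I expect this $4<q<6$ range to be the principal obstacle: neither $TT^*$ almost-orthogonality nor interpolation between $L^1\to L^\infty$ and Stein--Tomas $L^2\to L^6$ reaches it — the convex hull of the latter two falls strictly short of the critical line once $p>2$ — so one is pushed into genuine two-dimensional stationary phase and the nonvanishing of the mixed Hessian of $\phi$, or alternatively into C\'ordoba's bilinear/square-function $L^4$ argument. Combining the three pieces along the critical segment with the monotonicity reduction then yields \eqref{e:restriction_2d} throughout the stated regions, the excluded edges being precisely those ruled out by the necessity examples.
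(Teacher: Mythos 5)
The paper does not prove this theorem at all: it records it as a known result "quickly recovered from Corollary 1 of Section 5, Chapter IX" of Stein's book, which is exactly the Carleson--Sj\"olin estimate your sketch correctly identifies as the irreducible core for $4<q<6$. Your necessity examples (Knapp, parabolic scaling, the $|x|^{-1/2}$ decay of $\widehat{d\mu}$), the monotonicity/localization reductions, and the Stein--Tomas interpolation for $q\ge 6$ are all the standard and correct route, so your proposal matches the paper's (implicit) approach.
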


% \begin{proof}[Proof sketch]
%     The fist part is proved in Stein's book \cite[\S IX.5, Corollary~1]{Stein93book}. For the second statement, one can see that the same proof gives the result when the curve is parabola. Alternatively, one can see that the cited result gives the proof for the arbitrary compact subset of parabola and then use parabolic rescaling $(\xi_1,\xi_2)\mapsto (\lambda \xi_1, \lambda^2\xi_2)$ to remove the compactness assumption.
% \end{proof}

\subsection{Knapp-type estimate}
For a function $f:\R\to \C$ and $x\in \R^2$ let
\begin{equation}
    \label{e:E_def}
    Ef(x_1,x_2):=\int_{\R}e^{2\pi i (\xi x_1+\xi^2x_2)}f(\xi)\d \xi.
\end{equation}
For $\omega_0\in \mathbb{S}^1$ and $x_0\in \R^2$ denote with
\[
R(\omega_0,x_0;\alpha,\beta)
:=
\{x\in \R^2: 
|(x-x_0)\cdot \omega_0^{\bot}|\leq \alpha, \, 
|(x-x_0)\cdot \omega_0|\leq \beta,
\}
\]
the $\alpha\times\beta$ rectangle centered at $x_0$ oriented in the direction $\omega_0$. We will usually have $\alpha \gg \beta$ so that $\omega$ will be the direction of that tiny tube. 

The following lemma is a standard result, but we provide a proof because we want to track the dependence on $\xi_0$.

\begin{lemma}
\label{l:knapp}
Let $\phi \in C_c^{\infty}(\R)$ be a real function such that $\phi(0)=1$, and let $\xi_0 \in \R$ and $a \in \R^2$. Define
\[f_{\xi_0,a}(\xi):=e^{-2\pi i \xi \cdot a}\tau_{\xi_0}\phi(\xi).\]
Then there exists a constant $c>0$ such that
\[|E f_{\xi_0,a}(x)| \gtrsim 1_{R(\omega_{\xi_0}, a;\; c(1+|\xi_0|), c/(1+|\xi_0|))}(x),\]
where $\omega_{\xi_0} = (-2\xi_0,1)/|(-2\xi_0,1)|$.
\end{lemma}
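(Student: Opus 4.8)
The plan is to reduce the claim to the model configuration $\xi_0=0,\ a=0$ by two elementary symmetries of the extension operator, to prove the bound at that scale by quantitative continuity of $E\phi$ near the origin, and then to transplant the resulting neighbourhood back; the only genuine input will be that $\int_\R\phi\neq0$, so I assume (as one may, the lemma serving only to manufacture test functions) that $\phi\ge0$. Reading $\xi\cdot a=\xi a_1+\xi^2 a_2$ for $a=(a_1,a_2)$ — which is what the phase $(\xi,\xi^2)\cdot x$ in \eqref{e:E_def} forces — modulation by this character of the parabola translates $Ef$, so $E f_{\xi_0,a}(x)=E(\tau_{\xi_0}\phi)(x-a)$; since $R(\omega_{\xi_0},a;\cdot,\cdot)=a+R(\omega_{\xi_0},0;\cdot,\cdot)$ it suffices to treat $a=0$. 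The substitution $\xi=\xi_0+\eta$ then produces the shear identity
\[
E(\tau_{\xi_0}\phi)(x)=e^{2\pi i(\xi_0 x_1+\xi_0^2 x_2)}\int_\R e^{2\pi i\left(\eta(x_1+2\xi_0 x_2)+\eta^2 x_2\right)}\phi(\eta)\,\d\eta
=e^{2\pi i(\xi_0 x_1+\xi_0^2 x_2)}\,E\phi\!\left(x_1+2\xi_0 x_2,\;x_2\right),
\]
so that $|E f_{\xi_0,0}(x)|=|E\phi(S_{\xi_0}x)|$ with the shear $S_{\xi_0}(x_1,x_2):=(x_1+2\xi_0 x_2,\,x_2)$.

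For the model estimate I would use that, with $\supp\phi\subset[-M,M]$ and the first-order bound $|e^{i\theta}-1|\le|\theta|$, one has $|E\phi(y)-E\phi(0)|\le 2\pi(M|y_1|+M^2|y_2|)\|\phi\|_{L^1(\R)}$, while $E\phi(0)=\int_\R\phi>0$; hence there is $c_0=c_0(\phi)>0$ with $|E\phi(y)|\ge\tfrac12\int_\R\phi$ whenever $|y_1|,|y_2|\le c_0$. It then remains to choose $c>0$ so that $S_{\xi_0}$ maps the rectangle of the statement — centred at $0$, with half-side $\sim 1+|\xi_0|$ along $\omega_{\xi_0}$ (the normal to $\mathbb{P}^1$ at $\xi_0$) and half-side $\sim(1+|\xi_0|)^{-1}$ along $\omega_{\xi_0}^\perp$ — into $\{|y_1|,|y_2|\le c_0\}$. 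Writing $x=u\,\omega_{\xi_0}+v\,\omega_{\xi_0}^\perp$ with $\omega_{\xi_0}=N^{-1}(-2\xi_0,1)$, $\omega_{\xi_0}^\perp=N^{-1}(1,2\xi_0)$ and $N=\sqrt{1+4\xi_0^2}$, a one-line computation gives $x_1+2\xi_0 x_2=vN$ and $x_2=N^{-1}(u+2\xi_0 v)$. Since $N\asymp 1+|\xi_0|$, on the rectangle $|u|\lesssim c(1+|\xi_0|)$, $|v|\lesssim c(1+|\xi_0|)^{-1}$ one gets $|x_1+2\xi_0 x_2|=|v|N\lesssim c$ and $|x_2|\le N^{-1}(|u|+2|\xi_0||v|)\lesssim c$, both $\le c_0$ once $c=c(c_0)$ is small enough. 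Combining the two reductions with the model estimate yields $|E f_{\xi_0,a}(x)|\ge\tfrac12\int_\R\phi$ on that rectangle centred at $a$, which is the assertion. (The exact preimage $S_{\xi_0}^{-1}$ of the box is a parallelogram, of which a rectangle of these dimensions is an inscribed subset.)

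The main obstacle is mild and essentially bookkeeping: one must place the long side of the rectangle along $\omega_{\xi_0}$ and the short side along $\omega_{\xi_0}^\perp$, and use $N\asymp 1+|\xi_0|$ in both directions when transplanting through $S_{\xi_0}$. The only non-formal ingredient is $\int_\R\phi\neq0$, which is why one fixes a nonnegative bump; everything else is soft.
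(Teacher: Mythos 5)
Your proof is correct and follows essentially the same route as the paper's: reduce to $a=0$ by translation, use the shear identity $|E\tau_{\xi_0}\phi(x)|=|E\phi(x_1+2\xi_0x_2,\,x_2)|$, bound the model integral on a neighbourhood of the origin, and verify that the stated $c(1+|\xi_0|)\times c/(1+|\xi_0|)$ rectangle in direction $\omega_{\xi_0}$ is carried into that neighbourhood by the shear. Your extra assumption $\int_\R\phi\neq 0$ (via $\phi\ge 0$) is also implicitly needed in the paper's ``$\gtrsim_\phi 1$'' step, so it is not a genuine departure.
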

\begin{proof}
It is clear that the parameter $a \in \R^2$ only induces a translation, so we may assume without loss of generality that $a = 0$. Observe that
\[
E\tau_{\xi_0}\phi(x)
=
e^{2\pi i(\xi_0 x_1+\xi_0^2x_2)}
\int_\R e^{2\pi i(x_1+2\xi_0x_2)\xi+x_2\xi^2} \phi(\xi)\, \d \xi.
\]
Denoting 
\[\Tilde{R}(\xi_0):=\{ (x_1,x_2) : |x_1+2\xi_0 x_2|<1/10,\; |x_2|<1/10 \},\]
it is easy to see that if $(x_1,x_2) \in \Tilde{R}(\xi_0)$, then
\[|Ef_{\xi_0,a}(x)| \ge \Re\int_\R e^{2\pi i((x_1+2\xi_0x_2)\xi+x_2\xi^2)} \phi(\xi)\, \d \xi \gtrsim_{\phi} 1.\]
A simple computation
\[\begin{bmatrix}
    1 & 2\xi_0\\
    0 & 1
\end{bmatrix}\Big (\alpha \begin{bmatrix}
    -2\xi_0\\ 1
\end{bmatrix} + \beta\begin{bmatrix}
    1\\ 2\xi_0
\end{bmatrix}\Big) =\alpha \begin{bmatrix}
    0\\ 1
\end{bmatrix} + \beta \begin{bmatrix}
    1+4\xi_0^2\\
    2\xi_0,
\end{bmatrix}\]
implies that the last expression belongs to $\Tilde{R}(\xi_0)$ whenever $|\alpha| \leq c$ and $|\beta| \leq c/(1+|\xi_0|)^2$ for some small $c>0$. Therefore $\Tilde{R}(\xi_0)$ contains a rectangle of dimensions $\sim (1+|\xi_0|) \times (1+|\xi_0|)^{-1}$ in direction $\omega_{\xi_0}$ and this completes the proof.
\end{proof}

\subsection{Fourier transform estimate with a weight}
The following lemma, which first appeared in Zygmund's book \cite[p.~125]{ZygBook} in the context of Fourier series and was proved in the present form by Pan, Sampson, and Szeptycky \cite{PSS97}, is the main tool for proving estimates for the one-dimensional oscillatory integral that appears in our problem. See also paper of Xiao \cite{Xiao17} for a far-reaching generalization. We include the short proof for completeness, as we think it is not well known.
\begin{lemma}[\cite{PSS97}]
\label{l:PPS}
For any $p\in (1,2]$ the following estimate holds
    \[\int_{\R} |\widehat{f}(x)|^p|x|^{p-2}dx\lesssim_p \|f\|_{L^p(\R)}^p.
    \]
\end{lemma}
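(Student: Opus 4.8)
The plan is to route the estimate through the Lorentz space $L^{q,p}(\R)$ with $q:=p'$, which for $1<p<2$ sits strictly inside the Hausdorff--Young target $L^{q}(\R)$. The case $p=2$ is trivial: then $q=2$, the weight $|x|^{p-2}$ is identically $1$, and the claim is exactly Plancherel. So assume $1<p<2$, and reduce the lemma to the following two facts, which I would then chain together (after raising the first to the $p$-th power):
\begin{enumerate}[(i)]
\item $\|\widehat f\|_{L^{q,p}(\R)}\lesssim_p\|f\|_{L^p(\R)}$;
\item $\int_\R|\widehat f(x)|^p\,|x|^{p-2}\,dx\lesssim_p\|\widehat f\|_{L^{q,p}(\R)}^p$.
\end{enumerate}

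For (i) I would invoke the classical Lorentz refinement of the Hausdorff--Young inequality. The Fourier transform satisfies the two trivial endpoint bounds $\|\widehat f\|_{L^\infty}\le\|f\|_{L^1}$ and $\|\widehat f\|_{L^2}=\|f\|_{L^2}$, so applying the Marcinkiewicz interpolation theorem in the scale of Lorentz spaces, at the parameter $\theta\in(0,1)$ determined by $\tfrac1p=(1-\theta)+\tfrac\theta2$ (equivalently $\tfrac1q=\tfrac\theta2$), yields boundedness $L^{p,p}=L^p\to L^{q,p}$. (Alternatively, one can prove by hand the restricted-type bound $\|\widehat{1_E}\|_{L^{q,p}}\lesssim_p|E|^{1/p}$ using only $\|\widehat{1_E}\|_{L^\infty}\le|E|$ and $\|\widehat{1_E}\|_{L^2}=|E|^{1/2}$, and then interpolate; but (i) is standard.)

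For (ii) I would pass to a rearrangement integral. Since $p-2\in(-1,0)$, the weight $x\mapsto|x|^{p-2}$ is even and radially decreasing, hence coincides with its own symmetric decreasing rearrangement, and the Hardy--Littlewood rearrangement inequality gives
\[
\int_\R|\widehat f(x)|^p\,|x|^{p-2}\,dx\;\lesssim_p\;\int_0^\infty\big((\widehat f)^*(t)\big)^p\,t^{p-2}\,dt,
\]
where $(\widehat f)^*$ denotes the decreasing rearrangement of $|\widehat f|$. Since $\tfrac pq-1=p-2$, the right-hand side is, by the definition of the Lorentz quasinorm, a constant multiple of $\|\widehat f\|_{L^{q,p}(\R)}^p$, which is (ii).

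The only genuine obstacle is realizing that one must land in the \emph{small} Lorentz space $L^{q,p}$ and not merely in $L^{q}=L^{q,q}$: plain Hausdorff--Young is too weak here, because there exist $G\in L^{q}(\R)$ with $\int_\R|G(x)|^p|x|^{p-2}\,dx=\infty$; for instance, for $1/q<a\le1/p$ the function that equals $|x|^{-1/q}(\log|x|)^{-a}$ for $|x|$ large and is bounded near the origin lies in $L^{q}$ yet makes the weighted $L^p$ integral diverge logarithmically, since there the exponent $p-2-p/q$ equals $-1$. Once the target space is identified correctly, both (i) and (ii) are routine.
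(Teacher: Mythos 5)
Your argument is correct, but it takes a genuinely different route from the paper's. The paper interpolates the \emph{weighted} operator $\mathcal{T}f(x):=|x|\widehat f(x)$ directly, viewed as a map into Lebesgue spaces over the measure $d\nu=|x|^{-2}\,dx$: Plancherel gives the strong $L^2(dx)\to L^2(d\nu)$ endpoint, the trivial pointwise bound $|\mathcal{T}f(x)|\le |x|\,\|f\|_{L^1}$ gives a weak-type $L^1(dx)\to L^{1,\infty}(d\nu)$ endpoint (the relevant distribution function with respect to $\nu$ is computed explicitly), and ordinary Marcinkiewicz interpolation then yields $L^p\to L^p(d\nu)$, which is exactly the claim. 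You instead keep the Fourier transform unweighted, upgrade Hausdorff--Young to the Lorentz bound $L^p\to L^{p',p}$, and then convert the $L^{p',p}$ quasinorm into the weighted integral via the Hardy--Littlewood rearrangement inequality, using that $p/p'-1=p-2$. Both arguments ultimately rest on interpolating between the same two endpoints $(1,\infty)$ and $(2,2)$, so they are close in spirit; the paper's version is more self-contained (no Lorentz-space machinery beyond weak $L^1$ on a weighted measure space, and no rearrangement inequality), while yours isolates cleanly why plain Hausdorff--Young is insufficient --- your logarithmic example showing that membership of $\widehat f$ in $L^{q}$ alone does not control the weighted integral is a nice way to see that the second Lorentz index must be $p$ rather than $q$. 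Both are standard proofs of this instance of Pitt's inequality, and either would serve here.
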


\begin{proof}

Set $d\nu(x)=|x|^{-2}\,dx$ and $\mathcal{T} f(x)=|x|\,\widehat{f}(x)$. Observe that 
\[
\|\mathcal{T}f\|_{L^2(\R, \d\nu)} 
= 
\|f\|_{L^2(\R, \d x)}
\]
and 
\[
\|\mathcal{T}f\|_{L^{1,\infty}(\R, \d\nu)}
\leq 
2 \|f\|_{L^1(\R,\d x)}.
\]
The first equality is a consequence of Plancherel's theorem. The second inequality follows from the bound
\begin{align*}
    \nu(\{x\in\R:|\mathcal{T}f(x)|>\lambda\})
    \leq 
    \nu\Big(\Big\{x\in\R:|x|>\frac{\lambda}{\|f\|_{L^1(\R, \d x)}}\Big\}\Big)
    =
    2\frac{\|f\|_{L^1(\R, \d x)}}{\lambda}
\end{align*}
for all $\lambda>0$.
By the Marcinkiewicz interpolation theorem, we conclude that 
\[
\|\mathcal{T}f\|_{L^{p}(\R, \d\nu)}
\lesssim_p
\|f\|_{L^p(\R, \d x)}
\]
for $p\in(1,2]$, 
which is the desired estimate.
\end{proof}
\section{One-dimensional oscillatory integrals}
\label{s:oscillatory}

Let $f\in \mathcal{S}(\R)$ and we define the operators $T$, $T_{\low}$ and $T_{\high}$ with:
\[
T f(x)
:=
\int_{0}^{\infty} e^{2\pi ix\xi^2}f(\xi)\,\d \xi, \quad x\in \R,
\]
\[T_{\low}f(x):=T(1_{[0,1]}f)(x) \quad \text{and} \quad T_{\high}f(x):=T(1_{[1,\infty)}f)(x).\]
% \[
% T_{\low}f(x)
% :=
% \int_{0}^1 e^{2\pi ix\xi^2}f(\xi)\,\d \xi, \quad x\in \R.
% \]
% \[
% T_{\high}f(x)
% :=
% \int_{1}^\infty e^{2\pi ix \xi^2}f(\xi)\,\d\xi, \quad x\in \R.
% \]    
They satisfy the following estimates. 

% \textcolor{blue}{Is it better to note 
% \[
% T f(x)
% :=
% \mathcal{R}(|E_{\mathbb P^1}f|^q)(e_2,0)
% =
% \int_{0}^{\infty} e^{2\pi ix\xi^2}f(\xi)\,\d \xi, \quad x\in \R.
% \]
% \[
% T_{\low}f(x)
% :=
% \mathcal{R}(|E_{\mathbb P^1}(f\one_{(0,1)})|^q)(e_2,0)
% =
% \int_{0}^1 e^{2\pi ix\xi^2}f(\xi)\,\d \xi, \quad x\in \R.
% \]
% }
\begin{proposition}\label{p:T}
The estimate 
\begin{equation}\label{e:T}
    \|Tf\|_{L^q(\R)}
    \leq 
    C_{p,q}
    \|f\|_{L^p(\R)}
\end{equation}
holds for some positive constant $C_{p,q}<\infty$ if and only if 
\[\frac{1}{p}+\frac{2}{q}=1\quad  \text{and}\quad  p\leq q,\] 
i.e. $(\frac1p,\frac1q)$ belongs to $[A,E]$ in Figure~\ref{f:pqrangeAllRelevant}.
\end{proposition}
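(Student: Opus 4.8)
The plan is to separate the ``if'' and ``only if'' directions, treating the line $\frac1p+\frac2q=1$ as critical. For the ``if'' direction, I would first note that $Tf$ depends only on $f|_{(0,\infty)}$, so we may assume $\supp f\subset(0,\infty)$, and that the substitution $\eta=\xi^2$ turns $T$ into (a reflection of) the Fourier transform: $Tf(x)=\widehat g(-x)$ with $g(\eta):=\frac{f(\sqrt\eta)}{2\sqrt\eta}1_{(0,\infty)}(\eta)$, while the same substitution on the right-hand side gives $\|f\|_{L^p(\R)}^p\sim_p\int_0^\infty|g(\eta)|^p\,\eta^{(p-1)/2}\,\d\eta$. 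So \eqref{e:T} is a weighted Hausdorff--Young estimate for $g$. Since the segment $[A,E]$ has endpoints $A=(1,0)$, i.e. $(p,q)=(1,\infty)$, and $E=(1/3,1/3)$, i.e. $(p,q)=(3,3)$, and $T$ is linear, Riesz--Thorin interpolation reduces the proof to these two endpoints.

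The endpoint $(1,\infty)$ is the trivial bound $|Tf(x)|\le\int_0^\infty|f|$. For the endpoint $(3,3)$ I would invoke Lemma~\ref{l:PPS} by duality: writing $\|Tf\|_{L^3}=\|\widehat g\|_{L^3}=\sup_{\|k\|_{L^{3/2}}\le1}\big|\int\widehat g\,\overline k\big|$ and using the multiplication formula $\int\widehat g\,\overline k=\int g\,\widehat{\overline k}$, Hölder's inequality with exponents $3$ and $3/2$ bounds $\big|\int g\,\widehat{\overline k}\big|$ by $\big(\int_0^\infty|g(\eta)|^3|\eta|\,\d\eta\big)^{1/3}\big(\int_{\R}|\eta|^{-1/2}|\widehat{\overline k}(\eta)|^{3/2}\,\d\eta\big)^{2/3}$, and Lemma~\ref{l:PPS} with exponent $3/2$ bounds the last factor by $\|k\|_{L^{3/2}}$; since $\int_0^\infty|g(\eta)|^3|\eta|\,\d\eta\sim\|f\|_{L^3(\R)}^3$, this is the $(3,3)$ estimate. (One argues this for Schwartz $f$ and extends by density.)

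For the ``only if'' direction I would first dispose of all $(p,q)$ off the critical line by scaling: testing \eqref{e:T} with a Schwartz function $\phi$ vanishing to sufficiently high order at $0$ (so that $T\phi\in L^q(\R)$ for every $q$, with $\|T\phi\|_{L^q}\in(0,\infty)$) and with its dilates $\phi_\lambda(\xi):=\phi(\lambda\xi)$, for which $T\phi_\lambda(x)=\lambda^{-1}T\phi(\lambda^{-2}x)$, gives $\|T\phi_\lambda\|_{L^q}=\lambda^{2/q-1}\|T\phi\|_{L^q}$ and $\|\phi_\lambda\|_{L^p}=\lambda^{-1/p}\|\phi\|_{L^p}$, so letting $\lambda\to0,\infty$ forces $\frac1p+\frac2q=1$. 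It then remains to rule out the part of this line with $p>q$, which is precisely $q\in[2,3)$. Fix such a $q$, put $p=(1-\tfrac2q)^{-1}\in(3,\infty]$ and $b:=1-\frac1q\in[\tfrac12,\tfrac23)\subset(0,1)$, and for large $R$ test with $f_R(\xi):=\xi^{2/q-1}1_{[1,\sqrt R]}(\xi)$ (smoothed out if one insists on Schwartz functions). Because $(2/q-1)p=-1$ on the line, $\|f_R\|_{L^p}\sim_p(\log R)^{1/p}$ (and $\|f_R\|_{L^\infty}=1$ when $p=\infty$), while the substitution $\eta=\xi^2$ gives $Tf_R(x)=\tfrac12\int_1^R\eta^{-b}e^{2\pi ix\eta}\,\d\eta$. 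Using the standard identity $\big|\int_0^\infty\eta^{-b}e^{2\pi ix\eta}\,\d\eta\big|=c_b|x|^{b-1}$ with $c_b>0$ (the Fourier transform of a homogeneous distribution), together with the bounds $O(1)$ and $O(R^{-b}/|x|)$ for the truncations $\int_0^1$ and $\int_R^\infty$, one gets $|Tf_R(x)|\gtrsim_b|x|^{b-1}$ for $|x|$ in a window $[K_0/R,\delta_0]$; since $(b-1)q=-1$ this yields $\|Tf_R\|_{L^q}^q\gtrsim_b\int_{K_0/R}^{\delta_0}\frac{\d x}{x}\sim\log R$. Hence $\|Tf_R\|_{L^q}/\|f_R\|_{L^p}\gtrsim(\log R)^{1/q-1/p}\to\infty$ since $q<p$, contradicting \eqref{e:T}.

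The scaling step and the trivial endpoint are routine; the substance is, first, the realization that the $(3,3)$ endpoint is exactly dual to Lemma~\ref{l:PPS} (after which interpolation finishes the positive direction), and second --- this is where I expect the main difficulty --- the logarithmic Knapp-type example on the critical line. There the delicate point is to show that, on the whole dyadic window $K_0/R\lesssim|x|\lesssim\delta_0$, both truncations of the homogeneous oscillatory integral $\int_0^\infty\eta^{-b}e^{2\pi ix\eta}\,\d\eta$ are negligible compared with its main term $c_b|x|^{b-1}$, so that $|Tf_R|$ genuinely inherits the sharp pointwise lower bound $|x|^{b-1}$ and its $L^q$ norm really grows like $(\log R)^{1/q}$.
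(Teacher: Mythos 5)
Your proposal is correct. The sufficiency half is the paper's argument verbatim: trivial $L^1\to L^\infty$ bound, interpolation to the $(3,3)$ endpoint, and duality plus H\"older to reduce that endpoint to Lemma~\ref{l:PPS} with exponent $3/2$. The scaling argument for $\frac1p+\frac2q=1$ is also the same. Where you genuinely diverge is in ruling out $p>q$ on the critical line: you test with the sharply truncated homogeneous power $\xi^{2/q-1}1_{[1,\sqrt R]}$ and extract the pointwise lower bound $|Tf_R(x)|\gtrsim|x|^{-1/q}$ on the window $[K_0/R,\delta_0]$ from the classical identity $\big|\int_0^\infty \eta^{-b}e^{2\pi i x\eta}\,\d\eta\big|=c_b|x|^{b-1}$ together with the $O(1)$ and $O(R^{-b}/|x|)$ tail bounds; the paper instead tests with the Gaussian superposition $f_\eps(\xi)=\int_\eps^{1/\eps}e^{-\pi\xi^2/t^2}t^{-1/p-1}\,\d t$ (an approximation of the same homogeneous function $\xi^{-1/p}$), for which $Tf_\eps$ can be computed \emph{exactly} via \eqref{e:gauss int}, yielding the lower bound with no oscillatory-integral asymptotics at all. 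Both routes give the matching $(\log)^{1/q}$ versus $(\log)^{1/p}$ growth and are complete; your window analysis (choice of $\delta_0$ small and $K_0$ large so that both truncation errors are dominated by the main term) is the correct way to close the gap you flag. The one thing the paper's choice buys that yours does not is reusability: in Proposition~\ref{p:Tlow} the same Gaussian example is pushed through a steepest-descent contour argument for the \emph{localized} operator $T_{\low}$, where the exact formula \eqref{e:gauss int} is the starting point; a sharp-cutoff power would not transfer as cleanly there, but for Proposition~\ref{p:T} itself it is entirely adequate.
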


\begin{proposition}\label{p:Tlow}
The estimate 
\begin{equation}\label{e:Tlow}
    \|T_{\low} f\|_{L^q(\R)}
    \leq 
    C_{p,q}
    \|f\|_{L^p(\R)}
\end{equation}
holds for some positive constant $C_{p,q}<\infty$ if and only if 
\[\big(\frac{1}{p}+\frac{2}{q}=1\;\text{and}\; p\leq q\big),\quad \text{or}\quad \frac{1}{p}+\frac{2}{q}<1,\]
i.e. $(\frac1p,\frac1q)$ belongs to $\triangle OAD\backslash [D,E)$ in Figure~\ref{f:pqrangeAllRelevant}.
\end{proposition}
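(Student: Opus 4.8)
The plan is to prove the sufficiency by interpolating two families of endpoint bounds, and the necessity by two test-function constructions: a Knapp example on the region $\tfrac1p+\tfrac2q>1$, and a logarithmically refined homogeneous example on the critical boundary segment $[D,E)$. Throughout I will use that $T_{\low}f=T(1_{[0,1]}f)$ and $\|1_{[0,1]}f\|_{L^p}\le\|f\|_{L^p}$, together with the routine density/truncation reductions that allow the claimed estimates to be tested against the explicit (non-Schwartz) functions below.

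For the sufficiency, by the Riesz--Thorin interpolation theorem it suffices to establish \eqref{e:Tlow} at the points of the half-open segment $[O,D)$ and of the closed segment $[E,A]$, together with the elementary observation that $\triangle OAD\setminus[D,E)$ is precisely the convex hull of $[O,D)\cup[E,A]$ (in particular $D$ and the open segment $(D,E)$, which lie on the face $[DA]$ of the triangle, are not convex combinations of the given points, so they are correctly excluded). The bound on $[E,A]$, i.e.\ for $\tfrac1p+\tfrac2q=1$ and $p\le q$, is immediate from Proposition~\ref{p:T} applied to $1_{[0,1]}f$. The bounds on $[O,D)$ are the $L^\infty\to L^q$ estimates for $q\in(2,\infty]$: the substitution $\eta=\xi^2$ presents $T_{\low}f(x)$ as $\int_{\R}g(\eta)e^{2\pi i x\eta}\,\d\eta$ with $g(\eta)=\tfrac12\eta^{-1/2}f(\eta^{1/2})1_{(0,1]}(\eta)$, and since $\int_0^1\eta^{-r/2}\,\d\eta<\infty$ for $r\in[1,2)$ one gets $\|g\|_{L^r}\lesssim_r\|f\|_{L^\infty}$; Hausdorff--Young then gives $\|T_{\low}f\|_{L^{r'}}\lesssim\|g\|_{L^r}\lesssim_r\|f\|_{L^\infty}$, which is the desired bound with $q=r'\in(2,\infty]$. (The breakdown of this argument exactly at $r=2$ matches the exclusion of $D$.)

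For the necessity when $\tfrac1p+\tfrac2q>1$, test \eqref{e:Tlow} against $f_\delta:=1_{[0,\delta]}$, $0<\delta\le 1$: on $\{|x|\le c\delta^{-2}\}$ the phase $2\pi x\xi^2$ stays bounded on $[0,\delta]$, so $|T_{\low}f_\delta(x)|\gtrsim\delta$ there, whence $\|T_{\low}f_\delta\|_{L^q}\gtrsim\delta^{1-2/q}$, while $\|f_\delta\|_{L^p}=\delta^{1/p}$; letting $\delta\to 0$ forces $1-\tfrac2q\ge\tfrac1p$, which fails precisely when $\tfrac1p+\tfrac2q>1$. For the subtler segment $[D,E)$, i.e.\ $\tfrac1p+\tfrac2q=1$ with $p>q$ (equivalently $\tfrac1q\in(\tfrac13,\tfrac12]$), the example $f_\delta$ is scale-invariant along the critical line and detects nothing, so instead I test against the homogeneous-type function $f(\xi):=\xi^{-1/p}\big(\log\tfrac1\xi\big)^{-\gamma}1_{(0,1/2]}(\xi)$, with $\gamma$ chosen in the interval $(\tfrac1p,\tfrac1q]$, which is nonempty exactly because $p>q$. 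Then $\|f\|_{L^p}^p=\int_0^{1/2}\xi^{-1}\big(\log\tfrac1\xi\big)^{-\gamma p}\,\d\xi<\infty$ since $\gamma p>1$. On the other hand, after $\eta=\xi^2$ the function $T_{\low}f$ equals, up to a constant, the inverse Fourier transform of a function that near the origin is $\eta^{-\beta}\big(\log\tfrac1\eta\big)^{-\gamma}$ with $\beta=\tfrac12+\tfrac1{2p}\in[\tfrac12,1)$; since $\beta-1=-\tfrac1q$ on the critical line, the asymptotics of Fourier transforms of such logarithmically modulated homogeneous distributions should give $|T_{\low}f(x)|\gtrsim|x|^{-1/q}(\log|x|)^{-\gamma}$ for $|x|$ large, whence $\|T_{\low}f\|_{L^q}^q\gtrsim\int^{\infty}|x|^{-1}(\log|x|)^{-\gamma q}\,\d x=\infty$ because $\gamma q\le 1$. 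This contradicts \eqref{e:Tlow} for any finite constant.

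I expect the main obstacle to be making this last lower bound rigorous: one must analyze an oscillatory integral whose amplitude is a power times a logarithm with a singularity at the endpoint stationary point $\xi=0$, and show that the contribution of that singularity genuinely dominates with the asserted size $|x|^{-1/q}(\log|x|)^{-\gamma}$ (in particular that the cutoff at $\xi=\tfrac12$ and the bulk contribute only lower-order terms). This is exactly the place where the technique of smoothly approximating homogeneous distributions from \cite{BK23} is needed. Everything else --- the Hausdorff--Young estimate, the convex-hull identity, the Riesz--Thorin step, and the Knapp computation --- is routine.
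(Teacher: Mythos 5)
Your sufficiency argument is correct and takes a genuinely different route from the paper's. The paper handles the open region $\tfrac1p+\tfrac2q<1$ directly for each $(p,q)$ by dualizing, applying H\"older and Hausdorff--Young, and then using the local integrability of $x\mapsto x^{(-q'+1)r'}$ on $[0,1]$; you instead prove only the $L^\infty\to L^q$ endpoints for $q\in(2,\infty]$ (your substitution $\eta=\xi^2$ plus Hausdorff--Young is fine, and the breakdown at $q=2$ correctly mirrors the exclusion of $D$) and recover the rest by Riesz--Thorin, using the correct observation that $\triangle OAD\setminus[D,E)$ is the convex hull of $[O,D)\cup[E,A]$. Both approaches are valid; yours is arguably cleaner conceptually, the paper's avoids any interpolation bookkeeping. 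Your Knapp example $1_{[0,\delta]}$ for the necessity of $\tfrac1p+\tfrac2q\le1$ is also fine and is essentially the paper's dilation argument in concrete form.

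However, there is a genuine gap in the necessity of $p\le q$ on the critical line, i.e.\ the exclusion of $[D,E)$, and it sits exactly where you say you expect the main obstacle: the lower bound $|T_{\low}f(x)|\gtrsim|x|^{-1/q}(\log|x|)^{-\gamma}$ for your test function is asserted (``should give'') but not proved, and it is precisely the hard step of the whole proposition. After rescaling $\xi=u/\sqrt{x}$ your claim reduces to showing that $\int_0^{\sqrt{x}/2}e^{2\pi iu^2}u^{-1/p}\bigl(\log(\sqrt{x}/u)\bigr)^{-\gamma}\,\d u$ has absolute value $\gtrsim(\log x)^{-\gamma}$. The region $u\lesssim 1$ does produce a main term of real part $\gtrsim(\log x)^{-\gamma}$, but a dyadic van der Corput estimate of the oscillatory tail $1\lesssim u\lesssim\sqrt{x}$ only yields an upper bound of the \emph{same} order $(\log x)^{-\gamma}$ with an uncontrolled constant, because near $u\sim\sqrt{x}$ the logarithmic weight is $O(1)$ rather than $(\log x)^{-\gamma}$ and the amplitude decays only like $u^{-1/p}$ with $1/p<1$. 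So the main term is not shown to dominate, and there is no Gaussian regularization to make $\int_0^\infty e^{2\pi iu^2}u^{-1/p}\,\d u$ converge absolutely. This is exactly the difficulty the paper resolves: it tests against the superposition of Gaussians $f_\eps$ and, because the identity \eqref{e:gauss int} is unavailable for the truncated integral $\int_0^{\sqrt{x}}e^{-\pi\psi(t)\xi^2}\,\d\xi$, deforms the contour (Figure~\ref{f:contour}) to prove $\Re\int_0^{\sqrt{x}}e^{-\pi\psi(t)\xi^2}\,\d\xi\gtrsim|\psi(t)|^{-1/2}$ for $x\ge c_*$, from which $\|T_{\low}f_\eps\|_{L^q}\gtrsim(\log\eps^{-1})^{1/q}$ versus $\|f_\eps\|_{L^p}\lesssim(\log\eps^{-1})^{1/p}$ gives the contradiction for $p>q$. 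Your single-function formulation is plausible and could likely be completed with a comparable amount of contour/stationary-phase work, but as written the decisive lower bound is missing.
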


\begin{proposition}
    \label{p:Thigh}
    The estimate
    \begin{equation}
        \|T_\high f\|_{L^q(\R)}\leq C_{p,q}\|f\|_{L^p(\R)}
    \end{equation}
    holds if
    \[\big(\frac{1}{p}+\frac{2}{q}=1\; \text{and}\; p\leq q\big),\quad \text{or}\quad \big( \frac{1}{p}+\frac{2}{q}>1\; \text{and}\; \frac{1}{p}+\frac{1}{q}< 1,\; \text{and}\; q>2  \big),\]
    i.e. $(\frac1p,\frac1q)$ belongs to $\triangle ACD\setminus ([A,C]\cup [C,D] \cup [D,E))$ in Figure~\ref{f:pqrangeAllRelevant}.
\end{proposition}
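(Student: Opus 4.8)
The region in question is the union of the closed segment $[A,E]=\{(\tfrac1p,\tfrac1q):\tfrac1p+\tfrac2q=1,\ p\le q\}$ (which sits on the edge $[A,D]$, with $p=q=3$ at $E$ and $p=1,\,q=\infty$ at $A$) and the open triangle $\triangle ACD=\{(\tfrac1p,\tfrac1q):\tfrac1p+\tfrac1q<1,\ q>2,\ \tfrac1p+\tfrac2q>1\}$, and I would treat these two pieces by completely different arguments. In both cases the starting point is to substitute $\eta=\xi^2$ and write
\[
T_\high f(x)=\int_1^\infty e^{2\pi i x\xi^2}f(\xi)\,\d\xi=\int_1^\infty e^{2\pi i x\eta}\,\frac{f(\sqrt\eta)}{2\sqrt\eta}\,\d\eta=\widehat G(-x),\qquad G(\eta):=\frac{f(\sqrt\eta)}{2\sqrt\eta}\,1_{[1,\infty)}(\eta),
\]
so that $\|T_\high f\|_{L^q(\R)}=\|\widehat G\|_{L^q(\R)}$.

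On the open triangle $\triangle ACD$ no oscillatory input is needed. Since $q>2$, Hausdorff--Young gives $\|\widehat G\|_{L^q}\le\|G\|_{L^{q'}}$, and reversing the substitution yields $\|G\|_{L^{q'}}^{q'}=2^{1-q'}\int_1^\infty|f(\xi)|^{q'}\,\xi^{1-q'}\,\d\xi$. Throughout $\triangle ACD$ one has $\tfrac1p+\tfrac1q<1$, i.e. $p>q'$, so H\"older in $\xi$ with the conjugate exponents $\tfrac{p}{q'}$ and $\tfrac{p}{p-q'}$ bounds the right-hand side by $\|f\|_{L^p(\R)}^{q'}$ times $\bigl(\int_1^\infty\xi^{(1-q')p/(p-q')}\,\d\xi\bigr)^{(p-q')/p}$; a direct computation shows the exponent $(1-q')p/(p-q')$ is $<-1$ exactly when $\tfrac1p+\tfrac2q>1$, which holds on all of $\triangle ACD$. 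This proves the estimate there.

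On the segment $[A,E]$ I would prove the two endpoints and interpolate. At $A=(1,0)$ the bound $\|T_\high f\|_{L^\infty}\le\|f\|_{L^1}$ is immediate. The endpoint $E=(\tfrac13,\tfrac13)$, i.e. $\|T_\high f\|_{L^3(\R)}\lesssim\|f\|_{L^3(\R)}$, is where Lemma~\ref{l:PPS} enters. By duality it suffices to bound $\bigl|\int_\R T_\high f\,\overline\psi\bigr|$ for Schwartz $\psi$ with $\|\psi\|_{L^{3/2}}=1$; expanding $T_\high f$ and using Fubini,
\[
\int_\R T_\high f(x)\,\overline{\psi(x)}\,\d x=\int_1^\infty f(\xi)\,\overline{\widehat\psi(\xi^2)}\,\d\xi,
\]
so H\"older with exponents $3$ and $\tfrac32$ bounds this by $\|f\|_{L^3}\bigl(\int_1^\infty|\widehat\psi(\xi^2)|^{3/2}\,\d\xi\bigr)^{2/3}$. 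The substitution $s=\xi^2$ sends $\d\xi$ to $\tfrac{\d s}{2\sqrt s}$, hence $\int_1^\infty|\widehat\psi(\xi^2)|^{3/2}\,\d\xi\le\tfrac12\int_\R|\widehat\psi(s)|^{3/2}|s|^{-1/2}\,\d s$, and this last integral is precisely the left-hand side of Lemma~\ref{l:PPS} with $p=\tfrac32\in(1,2]$, so it is $\lesssim\|\psi\|_{L^{3/2}}^{3/2}=1$. This gives $\|T_\high f\|_{L^3}\lesssim\|f\|_{L^3}$. Interpolating (Riesz--Thorin) between $L^1\to L^\infty$ and $L^3\to L^3$ reproduces exactly the segment $[A,E]$, and together with the previous paragraph this settles the proposition.

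The only real difficulty is the point $E$: it is the single place where a genuine oscillatory estimate is required, and the argument works because dualizing $T_\high$ against an $L^{3/2}=(L^3)'$ test function and undoing the quadratic substitution on that test function produces exactly the weight $|s|^{-1/2}$ at the exponent $\tfrac32$ --- which is the precise content of Lemma~\ref{l:PPS}. Everything else (Hausdorff--Young, H\"older, interpolation, and the elementary computation of when the H\"older integral converges) is routine bookkeeping.
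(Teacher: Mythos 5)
Your proof is correct and follows essentially the same route as the paper: the segment $[A,E]$ is handled exactly as in Proposition~\ref{p:T} (trivial $L^1\to L^\infty$ bound, the $L^3\to L^3$ endpoint via duality and Lemma~\ref{l:PPS} with $p=\tfrac32$, then Riesz--Thorin), and the open triangle is the same Hausdorff--Young plus H\"older computation, with the integrability condition $\tfrac1p+\tfrac2q>1$ appearing in the same place. The only cosmetic difference is that you apply Hausdorff--Young on the primal side to $\widehat G$ while the paper dualizes first and applies it to $\widehat g$; the exponent bookkeeping is identical.
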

\textit{Note.} We did not pursue the proof of the necessity of the conditions in Proposition~\ref{p:Thigh} because we will not need them in the proofs of the main theorems. However, one can immediately see that the estimate cannot hold for any other $(\frac{1}{p},\frac{1}{q})$ inside the triangle $\triangle OAD$, since otherwise the operator $T$ would be bounded at that point.

\begin{proof}[Proof of Proposition~\ref{p:T}]
We begin by proving the sufficiency of the statement. %In particular, the estimate \eqref{e:T} holds if
% \begin{equation}\label{e:condition AE}
% \frac1p+\frac2q=1,\quad p\leq q .
% \end{equation}
Since $T$ is trivially bounded from $L^1(\R)$ to $ L^\infty(\R)$, by complex interpolation, it is enough to verify the endpoint $L^3(\R)\to L^3(\R)$ estimate.
% \[
% \|Tf\|_{L^3(\R)} 
% \lesssim
% \|f\|_{L^3(\R)}.
% \]
Taking the dual form and using change of variables, this is equivalent to 
\[
\Big|\int_{0}^{\infty}f(x^{\frac{1}{2}})x^{-\frac{1}{2}}\widehat{g}(x)dx \Big|
\lesssim
\norm{f}_{L^3(\R)}
\norm{g}_{L^{3/2}(\R)}. 
\]
% The left-hand side is equal to 
% \[
% \Big| \int_{0}^{\infty}f(x^{\frac{1}{2}})x^{-\frac{1}{6}}\widehat{g}(x)x^{1-\frac43}\,\d x\Big|. 
% \]
However, this follows from H\"older's inequality and Lemma~\ref{l:PPS} with $p=\frac32$:
\begin{align*}
\Big|\int_{0}^{\infty}f(x^{\frac{1}{2}})x^{-\frac{1}{2}}\widehat{g}(x)dx \Big|
&\leq
\Big(\int_0^\infty |f(x^\frac12)|^3x^{-\frac12},\d x\Big)^\frac13
\Big(\int_0^\infty |\widehat{g}(x)|^\frac32 x^{\frac32-2}\,\d x\Big)^\frac23
\leq
\|f\|_{L^3(\R)} \|g\|_{L^{\frac32}(\R)}.
\end{align*}

\bigskip

It requires more work to show the necessity of the condition as we are dealing with lower bounds of oscillatory integrals. 
First observe that necessity of the condition $\frac{1}{p}+\frac{2}{q}=1$ follows from dilation symmetry
\[T[f(\lambda \cdot)](x)=\lambda^{-1}Tf(\lambda^{-2}x).\]

We now proceed to prove the necessity of the condition $p\leq q$.
From the second sufficient condition in Proposition~\ref{p:Tlow}, which we will prove shortly,
it follows that a simple bump function cannot be used here, as its $L^p$ norms are comparable.
Instead, we must test the operator on a function whose $L^p$ norm is much smaller than its $L^{p'}$ norm for all $p' > p$. This is morally true for the function $x \mapsto |x|^{-1/p}$, but to formalize this intuition we use the approximation of homogeneous distribution from \cite[\S 4]{SW71Book}, further developed in \cite{BK23} by Kova\v{c} and the first author.

For $\varepsilon\in(0,1)$ let 
\[
f_\eps(\xi)
:=
\int_{\eps}^{1/\varepsilon} e^{-\pi \frac{\xi^2}{t^2}}t^{-\frac{1}{p}-1}\, \d t.
\]
Lemma 6 in \cite{BK23} implies that $\|f_\eps\|_{L^p([0,\infty))}\lesssim_p (\log\eps^{-1})^{\frac{1}{p}}$. However, we provide a short self-contained proof. Using change of variables, we have
\[\|f_\eps\|_{L^p([0,\infty))}^p \sim_p \int_{0}^{\infty} \xi^{-1}\Bigl|\int_{\pi\eps^2\xi^2}^{\pi(1/\eps^2)\xi^2} e^{- u}u^{\frac{1}{2p}-1}\, \d u \Bigr|^pd\xi.\]
For $\xi \in [\varepsilon, \varepsilon^{-1}]$, we use the fact that the inner integral is $O(1)$, as it is a truncation of the Gamma function, so integration in $\xi$ gives $O(\log \varepsilon^{-1})$.
When $\xi \leq \varepsilon$, we use $e^{- u} \leq 1$ to see that the inner integral is $O(\varepsilon^{-1} \xi)$, which gives an $O(1)$ contribution upon integration in $\xi$.
Finally, for $\xi \ge \varepsilon^{-1}$, we use the estimate $u^{\frac{1}{2p}-1} \leq 1$ to conclude that the inner integral is $O(e^{-\varepsilon^2 \xi^2})$, which again gives an $O(1)$ contribution upon integration in $\xi$.

On the other hand, by appropriate change of variables in $t$ and $\xi$ we get
\[T f_\eps(x)=x^{-\frac1q}
\int_{0}^{\infty}\int_{\eps \sqrt{x}}^{(1/\varepsilon)\sqrt{x}} 
e^{-\pi(\frac{1}{t^2} - 2 i)\xi^2} t^{-\frac{1}{p}-1}\,\d t\d\xi,\]
% \begin{align*}
% T f_\eps(x)
% &=
% \int_{0}^{\infty}\int_{\eps}^{1/\eps}
% e^{-\pi(\frac{1}{t^2} - 2 ix)\xi^2} t^{-\frac{1}{p}-1}\, \d t\d \xi\\
% &=
% x^{\frac{1}{2p}}\int_{0}^{\infty}\int_{\eps \sqrt{x}}^{(1/\varepsilon)\sqrt{x}} 
% e^{-\pi(\frac{1}{t^2} - 2 i) x\xi^2} t^{-\frac{1}{p}-1}\,\d t\d\xi\\
% &=
% x^{-\frac1q}
% \int_{0}^{\infty}\int_{\eps \sqrt{x}}^{(1/\varepsilon)\sqrt{x}} 
% e^{-\pi(\frac{1}{t^2} - 2 i)\xi^2} t^{-\frac{1}{p}-1}\,\d t\d\xi,
% \end{align*}
under the condition $\frac1p+\frac2q=1$. Invoking the fact that 
\begin{equation}\label{e:gauss int}
\int_{0}^{\infty}e^{-\alpha u^2}du
=
\frac{1}{2}\sqrt{\frac{\pi}{\alpha}}
\end{equation}
whenever $\Re \alpha >0$, we obtain
\begin{align*}
T f_\varepsilon(x)
&=
\frac{x^{-\frac{1}{q}}}{2}
\int_{\eps \sqrt{x}}^{(1/\eps)\sqrt{x}} 
\frac{1}{\sqrt{\frac{1}{t^2}-2i}} 
t^{-\frac{1}{p}-1}\,\d t
=
\frac{x^{-\frac{1}{q}}}{2}
\int_{\eps \sqrt{x}}^{(1/\eps)\sqrt{x}}
t^{-\frac{1}{p}} (4t^4+1)^{-\frac{1}{4}} e^{-\frac{i}{2}\arg(\frac{1}{t^2}-2i)}\,\d t.
\end{align*}
Now, note that we have $\Re (e^{-\frac{i}{2}\arg(\frac{1}{t^2}-2 i)}) \ge \frac{\sqrt{2}}{2}$ since $\frac12\arg(\frac{1}{t^2}-2 i)\in [-\frac{\pi}{4},0]$ (strictly away from $-\frac\pi2$).
It follows that 
\begin{align*}
    |Tf_\eps(x)|
    \geq \Re (Tf_\eps(x))=
    \frac{\sqrt{2}}{4} x^{-\frac{1}{q}}
    \int_{\eps \sqrt{x}}^{(1/\eps) \sqrt{x}}
    t^{-\frac{1}{p}} (4t^4+1)^{-\frac{1}{4}}\, \d t.
\end{align*}
In particular, for $x\in [1,1/(4\varepsilon^2)]$, since the integrand is positive, it follows that 
\[
\int_{\eps \sqrt{x}}^{(1/\eps)\sqrt{x}}
t^{-\frac{1}{p}} (4t^4+1)^{-\frac{1}{4}}\, \d t 
\ge 
5^{-\frac{1}{4}}\int_{\frac{1}{4}}^{1} t^{-\frac{1}{p}}\, \d t
\gtrsim_p 
1
\]
Finally, 
\[
\|T f_\varepsilon\|_{L^q(\R)}
\geq
\|T f_\varepsilon\|_{L^q([1,1/(4\varepsilon^2)])}
\gtrsim
(\log \varepsilon^{-1})^\frac1q.
\]
Letting $\varepsilon\to0$, we conclude that $p\leq q$ is necessary.
\end{proof}

\begin{proof}[Proof of Proposition~\ref{p:Tlow}]

We begin by proving the sufficiency of the statement. The first condition is the same as in Proposition \ref{p:T}, so the statement follows from Proposition \ref{p:T}. We turn to the proof of the sufficiency when 
\begin{equation}
    \label{e:condition<AD}
    \frac{1}{p}+\frac{2}{q}<1.
\end{equation}
By duality, the statement is equivalent to
\[\Big| \int_{0}^1f(x^{\frac{1}{2}})\widehat{g}(x)x^{-\frac{1}{2}}\d x\Big| \lesssim \norm{f}_{L^p([0,1])}\norm{g}_{L^{q'}(\R)}.\]
Using H\"older's inequality followed by the Hausdorff--Young inequality, we obtain
\begin{align*}
    \Big|\int_{0}^1f(x^{\frac{1}{2}})\widehat{g}(x)x^{-\frac{1}{2}}\d x\Big|
    %&\leq \norm{f(x^{\frac{1}{2}})x^{-\frac{1}{2}}}_{L_x^{q'}([0,1])}\norm{\widehat{g}}_{L^{q}(\R)}\\
    &\leq \norm{f(x^{\frac{1}{2}})x^{-\frac{1}{2}}}_{L_x^{q'}([0,1])} \norm{g}_{L^{q'}(\R)}.
\end{align*}
Observing that condition \eqref{e:condition<AD} implies $r:=\frac{p}{q'}>1$, we can apply another change of variables and H\"older's inequality to conclude
\begin{align*}
    \|f(x^{\frac{1}{2}})x^{-\frac{1}{2}}\|_{L_x^{q'}([0,1])} 
    %&=\Big(\int_{0}^1|f(x)|^{q'}x^{-q'+1}\d x\Big)^{1/q'}\\
    \leq \|f\|_{L^p([0,1])} \Big(\int_{0}^{1}x^{(-q'+1)r'}\d x \Big)^{\frac{1}{r'q'}} \lesssim_{p,q} \|f\|_{L^p([0,1])},
\end{align*}
where the last inequality holds because condition \eqref{e:condition<AD} is equivalent to $(-q'+1)r'>-1$, ensuring that $x\mapsto x^{(-q'+1)r'}$ is a locally integrable on $[0,1]$. 

\bigskip
We turn to the proof of necessity. Again, the condition $\frac{1}{p}+\frac{2}{q}\leq 1$ follows from the dilation symmetry for small dilation. 

The proof that $p\leq q$ is necessary when $\frac{1}{p}+\frac{2}{q}=1$ follows the one for Proposition~\ref{p:T} up to the point where the identity \eqref{e:gauss int} is invoked. Since the operator $T_\low$ is localized, the identity \eqref{e:gauss int} is no longer available, and an additional argument is required to proceed. 
To produce the required lower bound for the oscillatory integral, we employ the method of steepest descent. 

\begin{figure}
\begin{tikzpicture}[scale=3]
\tikzset{->-/.style={decoration={
  markings,
  mark=at position .5 with {\arrow{stealth}}},postaction={decorate}}}

\begin{scope}
  % quarter circle arc path
  \path[name path=C] (1.5,0) arc[start angle=0, end angle=90, radius=1.5];

  % line path
  \path[name path=line] (0,0) -- (1,2);

  % compute intersection
  \path[name intersections={of=C and line, by=X}];
  
  \draw (0,0) -- (1.5,0) arc[start angle=0, end angle=90, radius=1.5];
  
  % compute polar angle of X
  \pgfmathanglebetweenpoints{\pgfpoint{0}{0}}{\pgfpointanchor{X}{center}}
  \let\endangle\pgfmathresult

  % axes
  \draw[->] (-0.1,0)--(2,0) node [right] {$\Re$};
  \draw[->] (0,-0.3)--(0,1.7) node [above] {$\Im$};

  % draw the radius and arc only up to (X)

  \draw [] (0.2,0) arc[start angle=0, end angle=\endangle, radius=0.2];
  
  \draw [red,thick,->-](X) -- (0,0);
  \draw [red,thick,->-] (0,0)--(1.5,0);
  \draw [red,thick,->-] (1.5,0) arc[start angle=0, end angle=\endangle, radius=1.5];

\begin{scope}[shift={(0.3,-0.3)}]
  \draw [blue,->-] (0,0)--(1.5,0);

  % \fill[] (X) circle (0.7pt);

  \coordinate (A) at (0,0);
  \coordinate (B) at (1.5,0);
  
\end{scope}
  
  \fill[] (A) circle (0.5pt);
  \fill[] (B) circle (0.5pt);
  \draw[blue,->-] (0,0)--(A);
  \draw[blue,->-] (1.5,0)--(B);

  \node [right]at (0.16,-0.15) {$\rho_1(z_0)$};
  \node [right]at (1.5+0.16,-0.15) {$\rho_2(z_0)$};

  \node [] at (0.3,0.15) {$-\frac{\vartheta(t)}{2}$};

  % mark the intersection
  \fill[] (X) circle (0.5pt);
  \fill[] (1.5,0) circle (0.5pt);
  \fill[] (0,0) circle (0.5pt);

  \node[above right] at (X) {$Q$};
  \node [above right] at (1.5,0) {$P$};
  \node [above right] at (1.5/2,0) {$\gamma_1$};
  \node [below right,shift={(0.3,0)}] at (1.5/2,-0.3) {$\gamma_1-z_0$};
  \node [right] at (1.2,1) {$\gamma_2$};
  \node [left] at (0.5,1) {$\gamma_3$};
  \node [below] at (1.5,0) {$\sqrt{x}$};

  \node [below left] at (0,0) {$O$};

  \end{scope}

\end{tikzpicture}
\caption{The contour for the integral in $\C$.}
\label{f:contour}
\end{figure}
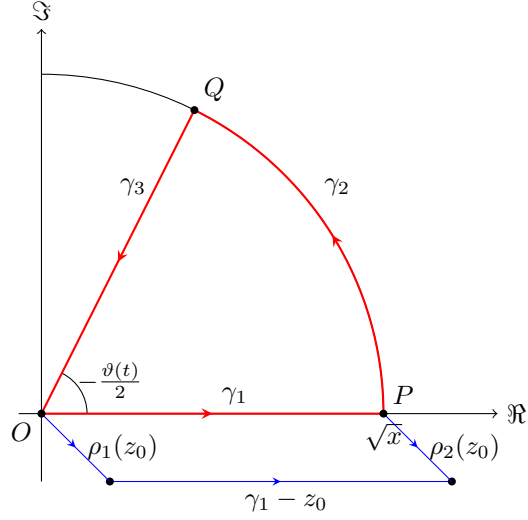

Let $\psi(t)=t^{-2}-2i$ and $\vartheta(t)=\arg\psi(t)$. 
Using the same change of variables as in the proof of Proposition~\ref{p:T}, we arrive to
\begin{equation}
    \label{e:T_low_expanded}
    T_\low f_{\eps}(x) =x^{-\frac{1}{q}}\int_{\eps\sqrt{x}}^{(1/\eps)\sqrt{x}} t^{-\frac{1}{p}-1}\int_{0}^{\sqrt{x}}e^{-\pi \psi(t)\xi^2}\,\d\xi \d t.
\end{equation}

Consider the path integral along the curve $OPQ$ in Figure~\ref{f:contour}. By Cauchy's integral theorem, 
we have
\[
\int_{0}^{\sqrt{x}} e^{-\pi\psi(t)\xi^2} \d \xi
=
\int_{-\gamma_3} e^{-\pi\psi(t) z^2} \d z
-
\int_{\gamma_2} e^{-\pi\psi(t) z^2} \d z,
\]
where $\gamma_2$ denotes the arc $PQ$ (with angle $-\vartheta(t)/2$) and $-\gamma_3$ denotes the line from $O$ to $Q$.

Let us first deal with the integral along $-\gamma_3$. After a suitable change of variables, we obtain 
\[
\int_{-\gamma_3} e^{-\pi\psi(t) z^2} \d z
=
\frac{e^{-i\vartheta(t)/2}}{|\psi(t)|^\frac12} 
\int_0^{\sqrt{x|\psi(t)|}} e^{-\pi r^2} \d r.
\]
Note the elementary inequality
\[\int_{0}^{1}e^{-y^2}dy\ge \int_{0}^{1}(1-y^2) \d y= \frac{2}{3},\]
together with the fact that $|\psi(t)|\ge 1$ for any $t>0$. 
Using these observations and following the same reasoning as in the proof of Proposition~\ref{p:T}, we deduce that for any $x\ge 1$ the following estimate holds:
%Since we have $|\psi(t)|^2=1/t^4+4\sim 1$ for $t\in(1/4,1)$, there exists some constant $c_1>0$ such that 
\[\Re\Big(\int_{-\gamma_3} e^{-\pi\psi(t) z^2} \d z \Big)=
\Re \Big(\frac{e^{-i\vartheta(t)/2}}{|\psi(t)|^\frac12}\int_0^{\sqrt{x|\psi(t)|}} e^{-\pi r^2} \d r \Big)
\gtrsim \frac{1}{|\psi(t)|^{\frac{1}{2}}}.
\]

Next, consider the integral along the arc $\gamma_2$. Using parametrization $\theta\mapsto \sqrt{x}e^{i\theta}$ and triangle inequality, we obtain
\begin{equation}
\label{e:int_gamma2_temp_0}
    \begin{split}
        \Big|\int_{\gamma_2} e^{-\pi\psi(t) z^2} \d z\Big| &= \Big|\sqrt{x}\int_{0}^{-\frac{\vartheta(t)}{2}}e^{-\pi \psi(t)x(\cos(2\theta)+i\sin(2\theta))} e^{i\theta}\,\d \theta\Big|\\
        &\leq \sqrt{x} \int_0^{-\frac{\vartheta(t)}{2}} e^{-\pi x(t^{-2}\cos(2\theta)+2\sin(2\theta))}\, \d \theta ,
    \end{split}
\end{equation}
where $-\vartheta(t)\in (0,\frac\pi2)$. 
Using elementary inequality $-\vartheta(t)=\arctan(2t^2)\leq 2t^2$, we obtain:
\[
t^{-2}\cos(2\theta)+2\sin(2\theta) \ge t^{-2}\cos(2t^2)\ge t^{-2}/2,\quad t\in [0,1/2].
%\max\Bigl\{ \frac{1}{2t^2},\theta \Bigr\}, 
\]
Therefore, using \eqref{e:int_gamma2_temp_0} and elementary inequality $e^{-u}\leq u^{-1},$ the following inequality holds:
\begin{equation}
\label{e:int_gamma2_temp_1}
     \Big|\int_{\gamma_2} e^{-\pi\psi(t) z^2} \d z\Big|\leq \sqrt{x}\int_{0}^{t^2}e^{-\pi \frac{x}{2t^2}} \d\theta\leq \sqrt{x}t^2e^{-\pi\frac{x}{2t^2}}\leq t^4x^{-\frac{1}{2}},\quad t\in\Big[0,\frac{1}{2} \Big] 
\end{equation}
On the other hand, when $t\ge \frac{1}{2}$, we can use \eqref{e:int_gamma2_temp_0} and the elementary estimate $\sin u\ge \frac{2}{\pi}u$, $u\in [0,\frac{\pi}{2}]$ to conclude:
\begin{equation}
    \label{e:int_gamma2_temp_2}
    \Big|\int_{\gamma_2} e^{-\pi\psi(t) z^2} \d z\Big| \leq \sqrt{x}\int_{0}^{\frac{\pi}{4}} e^{-4 x \theta}\, \d\theta \leq x^{-\frac{1}{2}}
\end{equation}
Combining estimates \eqref{e:int_gamma2_temp_1} and \eqref{e:int_gamma2_temp_2}, and using $|\psi(t)|^{-1}\sim \min\{t^2,1\}$ it follows that
\begin{equation}
    \label{e:int_gamma2}
    \Big|\int_{\gamma_2} e^{-\pi\psi(t) z^2} \d z\Big|
\leq x^{-\frac12}(t^41_{[0,1/2]}(t)+1_{[1,\infty)})\lesssim x^{-\frac12}|\psi(t)|^{-\frac{1}{2}}.
\end{equation}
Using triangle inequality we conclude that
\begin{align*}
            \Re \Big(\int_{0}^{\sqrt{x}} e^{-\pi\psi(t)\xi^2} \d \xi \Big) &\ge \Re \Big( \int_{-\gamma_3} e^{-\pi\psi(t)\xi^2}\, \d \xi\Big) - \Big|\int_{-\gamma_2} e^{-\pi\psi(t)\xi^2}\, \d \xi \Big|
    \ge (c_1-c_2x^{-\frac{1}{2}})|\psi(t)|^{-\frac{1}{2}}.
\end{align*}
Therefore, for some $c_*\ge 1$ sufficiently large depending on $c_1,c_2$ and $x\ge c_*$, the following inequality holds:
\begin{equation}
    \label{e:int_gamma1}
    \Re \Big(\int_{0}^{\sqrt{x}} e^{-\pi\psi(t)\xi^2} \d \xi \Big) \gtrsim |\psi(t)|^{-\frac{1}{2}}.
\end{equation}
Finally, inserting \eqref{e:int_gamma1} into \eqref{e:T_low_expanded}, for $x\in [c_*,\frac{\eps^2}{4}]$, the following holds
\begin{align*}
|x^\frac1q T_{\low} f(x)|
&=\Big|\int_{\varepsilon\sqrt{x}}^{(1/\varepsilon)\sqrt{x}} 
t^{-\frac1p-1}
\int_{0}^{\sqrt{x}} e^{-\pi\psi(t) \xi^2} \d \xi\d t\Big|
 \geq
\Re\Big(\int_{\varepsilon\sqrt{x}}^{(1/\varepsilon)\sqrt{x}}
t^{-\frac1p-1} 
\int_{0}^{\sqrt{x}} e^{-\pi\psi(t) \xi^2} \d \xi  \,\d t\Big)\\
&\gtrsim \int_{\varepsilon\sqrt{x}}^{(1/\varepsilon)\sqrt{x}} t^{-\frac{1}{p}}(1+4t^4)^{-\frac{1}{4}} \d t
\geq \int_{1/4}^{1} t^{-\frac{1}{p}}(1+4t^4)^{-\frac{1}{4}}\d t\gtrsim 1.
\end{align*}
Therefore,
\[
|T_\low f(x)| \gtrsim x^{-\frac1q}1_{[c_*, \frac{\eps^{-2}}{4}]},
\]
which yields, for $\eps>0$ small enough, so that $\eps^{-2}>4 c_*$,
\[
\|T_{\low} f\|_{L^q(\R)}
\geq 
\|T_{\low} f\|_{L^q([c_*,1/(4\varepsilon^2)])}
\gtrsim 
|2\log(\varepsilon^{-1}) -\log (4c_*)|^\frac1q.
\]
Letting $\eps\to 0$, we conclude the proof.
\end{proof}

\begin{proof}[Proof of Proposition \ref{p:Thigh}]
One can follow the proof for Proposition~\ref{p:Tlow} in the $\frac{1}{p}+\frac{2}{q}<1$ case verbatim. The condition $\frac1p+\frac2q>1$ is used in the final step
\[
\|f(x^\frac12)x^{-\frac12}\|_{L^{q'}_x}
\leq 
\|f\|_{L^p(\R)}\Big( \int_1^\infty x^{(-q'+1)r'} \,\d x\Big)\lesssim\|f\|_{L^p(\R)}
\]
to ensure that the integral is finite.  
    %This is also necessary since $T=T_\low+T_\high$. 
    % \[\Big| \int_{1}^{\infty}f(x^{\frac{1}{2}})\widehat{g}(x)x^{-\frac{1}{2}}\d x\Big| \lesssim \norm{f}_{L^p([0,1])}\norm{g}_{L^{q'}(\R)}.\]
    % Using H\"older's inequality and the Hausdorff-Young inequality in exactly the same way as in the proof of Proposition \ref{p:Tlow} in case \eqref{e:condition<AD}, we get the proof.
    % Indeed, the condition $q>2$ enables the use of Hausdorff--Young inequality, while the condition $\frac{1}{p}+\frac{1}{q}<1$ is equivalent to $r>1$ in the previous proof. Finally the condition $\frac{1}{p}+\frac{2}{q}>1$ is equivalent to $(-q'+1)r'<-1$, and so the integral
    % \[\int_{1}^{\infty}x^{(-q'+1)r'}\d x\]
    % that appears in the last step is finite. 
\end{proof}

%%%%%%%%%%%%%%%%%%%%%%%%%%%%%%%%%%%%%%%%%%%%%%%%

\section{Proof of Theorem~\ref{t:compact}}
\label{s:proof_compact}
First observe that using change of variables, one has
\begin{align*}
  \|E_{\Sigma}f\|_{L^q(\{|x\cdot \omega-t|\leq 1/2\})}^q 
  %&= \int_{-1/2}^{1/2}\int_\R\abs{E_{\Sigma}f(t\omega +s\omega^{\bot})}^q\d s\d t \\
  = \int_{-1/2}^{1/2} \mathcal{R}(|E_{\Sigma}f|^q)(\omega,t)\d t
  \leq \sup_{t\in [-1/2,1/2]} \mathcal{R}(|E_{\Sigma}f|^q)(\omega,t).  
\end{align*}
Therefore, the estimate \eqref{e:resriction_strip} follows from the estimate \eqref{e:radon_restr}.

\subsection{Initial reductions}

Since the curve is compact, it can be locally parametrized as the graph of a function $h:I\to \R$ so that $\xi_2=h(\xi_1)$ (or vice a versa). We can choose the parametrization so that $|h'(\xi_1)|\sim 1$ for all $\xi_1\in I$ by selecting the appropriate parametrization variable. 
Then we also have $|h''(\xi_1)|\gtrsim 1$ due to the fact that the curvature $\kappa$ satisfies $\kappa(\xi_1,h(\xi_1)) = \frac{|h''(\xi_1)|}{(1+h'(\xi_1)^2)} \sim |h''(\xi_1)|$.

Therefore, the operator $E_{\Sigma}$ can be written as a finite sum of operators in which $\Sigma$ is the graph of the function $h$ over $I$, namely,
\[
E_\Sigma f(x_1,x_2)=\int_{I} e^{2\pi i (u,h(u))\cdot (x_1,x_2)}f(u,h(u))J_h(u)\d u,\]
where $I$ is an interval and $J_h(u):=(1+h'(u)^2)^{1/2}$ is a volume factor.

Observe that $J_h(u)\sim 1$ for all $u\in I$ for our choice of parametrization. 
Denoting 
\[f_1(u):=f(u,h(u))J_h(u),\] 
we can see that
\[\|f_1\|_{L^p(\R)}^p = \int_{I}|f(u,h(u))J_h(u)|^p \d u\sim \int_{I} |f(u,h(u))|^pJ_h(u)du = \|f\|_{L^p(\Sigma)}^p.\]
Therefore, in order to prove \eqref{e:radon_restr}, it suffices to prove that
\begin{equation}
    \label{e:h_sufficient}
    (\mathcal{R}(|E_{h}f|^q)(\omega,t))^{\frac{1}{q}}\lesssim \|f\|_{L^p(\R)},
\end{equation}
where $E_h$ is defined by:
\begin{equation}
    \label{e:E_h_def}
    E_hf(x):=\int_{I} e^{2\pi i (u,h(u))\cdot (x_1,x_2)}f(u)\d u
\end{equation}

We continue with reductions.
For any $u_0\in I$, we can write 
\[h(u)=h(u_0)+h'(u_0)(u-u_0)+r_2(u-u_0),\] 
so that $r_2:I_0\to \R$ satisfies $r_2(0)=r_2'(0)=0$. Using
\begin{align*}
    E_hf(x_1,x_2)&=e^{2\pi i (u_0x_1+h(u_0)x_2)}\int_{I}e^{2\pi i((u-u_0)(x_1+h'(u_0)x_2)+ r_2(u-u_0)x_2 )}f(u)\d u\\
    &=e^{2\pi i (u_0x_1+h(u_0)x_2)}E_{r_2}\tau_{-u_0}f(x_1+h'(u_0)x_2,x_2),  
\end{align*}
we conclude that
\begin{align}
\label{e:E_h_change_of_var}
    |E_hf(x_1,x_2)| = |E_{r_2}\tau_{-u_0}f(x_1+h'(u_0)x_2,x_2)|.
\end{align}
Denoting
\[\omega(h'(u_0))=\begin{bmatrix}
    1 & 0\\
    -h'(u_0) & 1
\end{bmatrix} \begin{bmatrix}
    \omega_1\\ 
    \omega_2 
\end{bmatrix}, \]
and using change of variables together with
\[\mathcal{R}g(\omega,t)=\lim_{\delta \to 0}\frac{1}{\delta}\int_{\{|x\cdot \omega - t|\leq \delta\}}g(x)\, \d x,\]
one can check that
\begin{equation}
    \label{e:radon_change_var}
    \mathcal{R}(|E_{h}f|^q)(\omega, t) = |\omega(h'(u_0))|^{-1} \mathcal{R}(|E_{r_2}\tau_{-u_0}f|^q)\big(\frac{\omega(h'(u_0))}{|\omega(h'(u_0))|},\frac{t}{|\omega(h'(u_0))|}\big).
\end{equation}

Since $\abs{\omega(h'(u_0))}\sim 1$, and since the right hand side of \eqref{e:h_sufficient} is translation invariant, we conclude that it is sufficient to prove the estimate \eqref{e:h_sufficient} under the additional assumption that $I$ is an interval around $0$ and $h(0)=h'(0)=0$. 

\subsection{Transversal case}
This calculation is the same as in \cite{BN21}, but since we work with general curves, we first needed to decompose it, and we include a short proof for completeness.
\begin{proof}
    Recall the assumptions that $0\in I$ and $h(0)=h'(0)=0$. 
    
    Operator $E_{h}$ is trivially bounded from $L^1(\Sigma)$ to $L^{\infty}(\R^2)$. Therefore, by complex interpolation and H\"older's inequality, it suffices to prove the estimate \eqref{e:h_sufficient} for $p=q=2$. 
    
    Since $0\in I$ and $h'(0)=0$, we must have $\omega\neq (1,0)$. 
    Hence, we can parametrize the set $\{x\cdot \omega =t\}$ with $x_2=\alpha x_1+\beta$ for some $\alpha, \beta\in \R$ and observe that $\omega \parallel (\alpha,-1)$.
    
    The condition is equivalent to the statement that $\omega$ is not perpendicular to any normal vector $n(u)=(-h'(u),1)$ and since $I$ is compact, we must have
    \begin{equation*}
        \abs{1+\alpha h'(u)}=\abs{(\alpha, -1)\cdot (-h'(u),1)}\sim_{\omega} \abs{\omega \cdot n(u)} \gtrsim_{\omega} 1,\quad u\in I.
    \end{equation*}
    This implies that the function $\psi(u):=u+\alpha h(u)$ satisfies 
    \begin{equation}
        \label{e:separation}
        \abs{\psi'(u)}\gtrsim_{\omega} 1, \quad u\in I.
    \end{equation}
    Substitution $v=\psi(u)$ gives
    \begin{align*}
       \mathcal{R}(\abs{E_h}^2)(\omega,t)
       &\sim_{\alpha} \int_{\R}\Big|\int_{I} e^{2\pi i (u,h(u))(x_1,\alpha x_1 +\beta)} f(u) \d u\Big|^2\d x_1
        = \int_{\R}\Big|\int_{I} e^{2\pi i x_1( u+ \alpha h(u)) } e^{2\pi i\beta h(u)}f(u) \d u\Big|^2\d x_1\\
       &= \int_{\R}\Big|\int_{\R} e^{2\pi i x_1 v } F(v)dv\Big|^2\d x_1
    \end{align*}
    where
    \[F(v):=e^{2\pi i \beta h(\psi^{-1}(v))}(f\cdot 1_I)(\psi^{-1}(v))J_{\psi^{-1}}(v).\]
    By Plancherel's theorem, the last expression is, because of \eqref{e:separation}, bounded with
    \begin{align*}
        \norm{F}_{L^2(\R)}^2=\int_{\R} \abs{(f\cdot 1_I)(\psi^{-1}(v))J_{\psi^{-1}}(v)}^2dv =\int_{I} \abs{f(u)}^2\frac{1}{\abs{\psi'(u)}} du\lesssim \norm{f}_{L^2(\R)}^2.
    \end{align*}
    Therefore, the statement is proved.
\end{proof}

\subsection{Non-transversal case}
\begin{proof}
    Since the curvature is nonzero, the function $h''$ has constant sign on $I$. This implies that the function $u\mapsto (1,h'(u_0))\cdot \omega^{\bot}$ is strictly monotone and therefore it can have at most $1$ zero in $I$. After a change of variables \eqref{e:radon_change_var} at point $u_0$, the problem is reduced to proving
    \[(\mathcal{R}\abs{E_{h}f}^{q}((1,0),t))^{\frac{1}{q}} \lesssim \norm{f}_{L^p(\R)},
    \]
    where the function $h$ satisfies $h(0)=h'(0)=0$.
    Observe that 
    \begin{equation}
        \label{e:E_h_reduced}
        (\mathcal{R}(\abs{E_{h}f}^{q})((1,0),t))^{\frac{1}{q}} = \Big(\int_{\R} \big|\int_{I} e^{2\pi i h(u)x_2} e^{2\pi itu}f(u)\d u\big|^q \d x_2\Big)^{\frac{1}{q}}.
    \end{equation}
    Dividing the domain of integration in the inner integral into positive and negative part, we can assume that $I\subset [0,\infty)$. Since $h''\gtrsim 1$ and $h'(0)=0$, it follows that $h$ is strictly increasing on $I$, so we can make a substitution $h(u)=y^2$. Denoting $F(u):=e^{2\pi i t u}f(u)$, the inner integral becomes
    \begin{equation*}
        \int_{I'}e^{2\pi i y^2} F(h^{-1}(y^2)) \frac{2y}{h'(h^{-1}(y^2))}\d u.
    \end{equation*}
    Therefore, by Proposition \ref{p:Tlow}, the expression \eqref{e:E_h_reduced} is bounded by
    \begin{equation}
        \label{e:compact_proof_penultimate}
        \Big\|F(h^{-1}(y^2)) \frac{2y}{h'(h^{-1}(y^2))}\Big\|_{L_y^p(\R)} = \Big(\int_{I} \abs{F(u)}^{p} \Big( \frac{h(u)}{h'(u)^2} \Big)^{\frac{p-1}{2}}\d u \Big)^{\frac{1}{p}}
    \end{equation}
    Finally, observe that a generalized Lagrange's intermediate value theorem implies that for any $u \ge 0$ there exists $u_1\in (0,u)$ so that
    \[ \frac{h(u)}{h'(u)^2} = \frac{h(u)-h(0)}{h'(u)^2-h'(0)^2}= \frac{h'(u_1)}{2h'(u_1)h''(u_1)}=\frac{1}{2h''(u_1)}\lesssim 1. \]
    Therefore,
    \[\eqref{e:compact_proof_penultimate}\lesssim \norm{F}_{L^p(\R)} = \norm{f}_{L^p(\R)},
    \]
    which completes the proof.
\end{proof}

\subsection{Necessity of the conditions}

\subsubsection{Necessity of $1/p+1/q\leq 1$ in the transversal case}\label{sss:antidiagonal}
\begin{proof}
    We prove that
    \begin{equation}\label{e:condition anti-diag}
    \frac{1}{p} + \frac{1}{q} \leq 1
    \end{equation}
    is a necessary condition for the estimate \eqref{e:resriction_strip} to hold in the transversal case.
    The argument is based on a standard Knapp-type example.
    
    Let $\xi_0\in \Sigma$ be any point, let $\phi\in C_c^{\infty}(\R^2)$ be a bump function such that $\phi(0)=1$ and $\delta >0$ a small real number. Define 
    \[f_\delta(\xi):=\phi((\xi -\xi_0) /\delta).
    \] 
    Then for $\delta>0$ small enoguh
    \[\|f\|_{L^p(\Sigma)}\sim \delta^{\frac{1}{p}.}\]
    On the other hand, using the same calculations as in Lemma~\ref{l:knapp}, we have
    \begin{equation*}
    |E_{\Sigma}f_\delta(x)| \gtrsim \delta \cdot 1_{R(n(\xi_0),0,c\delta^{-2},c\delta^{-1})}(x).
    \end{equation*}
    When the transversality condition \eqref{e:transversal} holds, we obtain the following lower bound:
    \begin{equation}\label{e:strip_cap_knapp_measure}
    |\{x : |x \cdot \omega| \leq 1/2\} \cap R(n(\xi),0,c\delta^{-2},c\delta^{-1})| \gtrsim \delta^{-1}.
    \end{equation}
    Therefore, if \eqref{e:resriction_strip} holds, we must have
    \[
    \delta^{1-\frac{1}{q}}\lesssim \|E_\Sigma f_\delta\|_{L^q(\{|x\cdot \omega|\leq 1/2\})} \lesssim \norm{f_\delta}_{L^p(\Sigma)}\lesssim \delta^{\frac{1}{p}}.
    \]
    Letting $\delta \to 0$, yields the condition \eqref{e:condition anti-diag}.
\end{proof}

\subsubsection{Necessity of $q\ge 2$ in the transversal case}
\label{sss:q>=2}
\begin{proof}
    By the parabolic rescaling \eqref{e:E_h_change_of_var}, the problem can be reduced to $h(x)=x^2$ case, defined on a small neighborhood $I$ of $0$.
    Let $\phi\in C_c^{\infty}(\R)$ be a nonzero function with support in $[-0,1,0.1]$ and let $J\in \N$ be a large number. There are $cJ$ indices $j$ such that the supports of translates $\phi(J\xi-j)$ are contained in $I$, for some $c>0$.

    Let $f=\sum_{j=0}^{cJ}\epsilon_jf_j$
    where $\epsilon_j\in \{\pm 1\}$ will be chosen shortly, and
    \[f_j(\xi):= \phi(J\xi-j)\] 
    Since $f_j$ have mutually disjoint supports,
    \[\|f\|_{L^p(\R)} =\Bigl(\sum_{j=1}^{cJ}\|f_j\|_{L^p(\R)}^p\Bigr)^{\frac{1}{p}}\sim_p 1.\]
    On the other hand, using Khinchine inequality,
    \[\mathbb{E}_{(\epsilon_j)}\Bigl\|\sum_{j=1}^{cJ}\epsilon_jE_hf_j \Bigr\|_{L^q(\{|x\cdot \omega|\leq 1/2\})} \gtrsim_q \Bigl\|\Big(\sum_{j=1}^{cJ}|E_hf_j|^2 \Bigl)^{\frac{1}{2}}\Bigr\|_{L^q(\{|x\cdot \omega|\leq 1/2\})}.\]
    Therefore, there exists some choice of $\epsilon_j$ such that
    \[\|E_{h}f\|_{L^q(\{|x\cdot \omega|\leq 1/2\})}\gtrsim_q \Bigl\|\Big(\sum_{j=1}^{cJ}|Ef_j|^2 \Bigl)^{\frac{1}{2}}\Bigr\|_{L^q(\{|x\cdot \omega|\leq 1/2\})}.\]
    Observe the following identity. Denoting $g_\lambda(\xi):=g(\lambda \xi)$, one has 
    \[E_hg_\lambda(x_1,x_2)=\lambda^{-1}E_hg(x_1/\lambda,x_2/\lambda^2).
    \]
    
    Therefore, using that equality and Lemma \ref{l:knapp} with $|\xi_0|= |j/J|=O(1)$, there exists some $c>0$ such that
    \[|Ef_j|\gtrsim J^{-1}1_{R(v_j,0;cJ^2,cJ)},\]
    where $v_j=(-2j/J,1)/|(-2j/J,1)|$. Since all rectangles $R(v_j,0;cJ^2,cJ)$ contain a ball $B(0,cJ)$ around $0$, we conclude that
    \[\|E_hf\|_{L^q(\{|x\cdot \omega|\leq 1/2\})} \gtrsim \|J^{-\frac{1}{2}}1_{B(0,cJ)}(x)\|_{L^q(\{|x\cdot \omega|\leq 1/2\})} \gtrsim J^{\frac{1}{q}-\frac{1}{2}}.\]
    Therefore, if \eqref{e:resriction_strip} holds, we must have $J^{\frac{1}{q}-\frac{1}{2}}\lesssim_p 1$. Letting $J\to \infty$ implies the statement.
\end{proof}

\subsubsection{Necessity of $1/p+2/q\leq 1$ in the non-transversal case}
\begin{proof}
   The proof follows the same reasoning as in the transversal case, except that in this case we obtain a stronger lower bound than in \eqref{e:strip_cap_knapp_measure}, namely $\gtrsim \delta^{-2}$. 
\end{proof}

\subsubsection{Necessity of $p\leq q$ when $1/p+2/q=1$ in the non-transversal case}
\begin{proof} 
    Using parabolic rescaling \eqref{e:E_h_change_of_var}, we can reduce the problem to the proof of unboundedness of the operator $E_h$, defined in \eqref{e:E_h_def}, with $h(x)=x^2$, where $I\subset \R$ is a small interval around $0$ on a vertical strip. We can also assume that the interval has width $1$ to simplify the notation.

    Let $f_\eps$ be the same function as in the proof of Proposition \ref{p:Tlow}. Then Proposition \ref{p:Tlow} implies 
    \[|E_{h}f_{\eps}(0,x_2)|=|T_{\low}f_\eps(x_2)|\gtrsim |x_2|^{-\frac{1}{q}}\]
    Moreover, for $\abs{x_1}\ll 1$ it is natural to expect that $E_{h}f(x_1,x_2)\approx E_{h}f(0,x_2)$.

    Indeed, using the same notation as in the proof of Proposition \ref{p:Tlow}, we have
    \begin{align*}
      E_{h}(f_{\eps})(x_1,x_2)
      &=x_2^{-\frac{1}{q}}\int_{\eps \sqrt{x_2}}^{\eps^{-1} \sqrt{x_2}}t^{-\frac{1}{p}-1}\int_{0}^{\sqrt{x_2}} e^{-\pi \psi(t)\xi^2+2\pi i \xi x_1/\sqrt{x_2}}\, \d \xi\d t
    \end{align*}
    Recalling $\psi(t)=t^{-2}-2i$, we shall define 
    \[
    %b:=b(x_1,x_2)= ix_1/\sqrt{x_2}\quad \text{and}\quad 
    z_0:=z_0(x_1,x_2,t)=\frac{ix_1}{\sqrt{x_2}\psi(t)}.
    \]
    The inner integral is (after completing the square) equal to:
    \[e^{-\pi\frac{x_1^2}{x_2\psi(t)}}\int_{0}^{\sqrt{x_2}} e^{-\pi \psi(t)(\xi-z_0)^2 }\, \d \xi =
    e^{-\pi\frac{x_1^2}{x_2\psi (t)}}\int_{\gamma_1-z_0}e^{-\pi \psi(t)z^2} \, \d z,\]
    where $\gamma_1$ is the path in Figure \ref{f:contour}.
    Denote 
    \[F(t):=\int_{\gamma_1-z_0}e^{-\pi \psi(t)z^2}
    \, \d z.
    \]
    Our goal is to show that there exists $\delta,c_* >0$ such that the following inequality
    \begin{equation}
    \label{e:E_x2_lower}
        x_2^{\frac{1}{q}}|E_h(x_1,x_2)|=\Big|\int_{\eps \sqrt{x_2}}^{\eps^{-1} \sqrt{x_2}}t^{-\frac{1}{p}-1}e^{-\pi\frac{x_1^2}{x_2\psi(t)}}F(t)\d t\Big|\gtrsim 1
    \end{equation}
    holds for every $(x_1,x_2)\in [-\delta, \delta]\times [c_*,\frac{\eps^{-2}}{4}]$. 
    We claim that the statement will follow once we prove that the following estimates hold for  $(x_1,x_2)\in [-\delta, \delta]\times [c_*,\frac{\eps^{-2}}{4}]$:
    \begin{gather}
        \label{e:F_lower}
        \Re (F(t))\gtrsim \frac{1}{|\psi(t)|^{\frac{1}{2}}}, \\
        \label{e:F_upper}
        \Bigl|\int_{\eps \sqrt{x_2}}^{\eps^{-1} \sqrt{x_2}}t^{-\frac{1}{p}-1}(e^{-\pi\frac{x_1^2}{x_2\psi(t)}}-1)F(t)\d t\Bigr|\lesssim \delta^2.
    \end{gather}
    % \begin{equation}
    % \label{e:F_lower}
    %   \Re (F(t))\gtrsim \frac{1}{|\psi(t)|^{\frac{1}{2}}} 
    % \end{equation}
    % and
    % \begin{equation}
    %     \label{e:F_upper}
    %     \Bigl|\int_{\eps \sqrt{x_2}}^{\eps^{-1} \sqrt{x_2}}t^{-\frac{1}{p}-1}(e^{-\pi\frac{x_1^2}{x_2\psi(t)}}-1)F(t)\d t\Bigr|\lesssim \delta^2.
    % \end{equation}
    Indeed, using triangle inequality and estimates \eqref{e:F_lower}, \eqref{e:F_upper}, it follows
    \begin{align*}
    &\Big|\int_{\eps \sqrt{x_2}}^{\eps^{-1} \sqrt{x_2}}t^{-\frac{1}{p}-1}e^{\pi\frac{b^2}{4\psi (t)}}F(t)\d t\Big|
    \ge \Big| \int_{\eps \sqrt{x_2}}^{\eps^{-1} \sqrt{x_2}} t^{-\frac{1}{p}-1}F(t)\d t \Big|-\Big|\int_{\eps \sqrt{x_2}}^{\eps^{-1} \sqrt{x_2}}t^{-\frac{1}{p}-1}(e^{-\pi\frac{x_1^2}{x_2\psi(t)}}-1)F(t)\d t\Big|
    \\
    &\quad \ge \Re\Bigl( \int_{\eps \sqrt{x_2}}^{\eps^{-1} \sqrt{x_2}} t^{-\frac{1}{p}-1}F(t)\d t \Bigr)-C_1\delta^2
    \ge C_2\int_{\eps \sqrt{x_2}}^{\eps^{-1} \sqrt{x_2}} t^{-\frac{1}{p}-1}\frac{1}{|\psi(t)|^{\frac{1}{2}}}\d t - \delta^2 \ge C_2- C_1\delta^2.
    \end{align*}
    When $\delta>0$ is small enough with respect to $C_1,C_2$, the inequality \eqref{e:E_x2_lower} holds in the required range.

    We first need some estimates on $F$. By the change of contours observe that
    \begin{equation}
        \label{e:F_diff_gamma_1}
        \Big|F(t)-\int_{\gamma_1}e^{-\pi \psi(t)z^2}dz\Big|\leq \sum_{j=1}^{2} \Big|\int_{\rho_j(z_0)}e^{-\pi\psi(t)z^2}\, \d z \Big|,
    \end{equation}
    where for $r\in[0,1]$ the path $\rho_1(z)=rz_0$ is a segment from $0$ to $z_0$ and $\rho_2(z)=\sqrt{x_2}+rz_0$ is a segment from $\sqrt{x_2}$ to $\sqrt{x_2}+z_0$ (see also Figure~\ref{f:contour}).

    Using triangle inequality and \eqref{e:int_gamma2}, for $x_2\ge 1$ it holds that
    \begin{align}
    \label{e:gamma_1_abs_bound}
      \Big|\int_{\gamma_1}e^{-\pi \psi(t)z^2}dz \Big|
      &\leq 
      \Big|\int_{-\gamma_3}e^{-\pi \psi(t)z^2}dz \Big|
      + 
      \Big|\int_{\gamma_2}e^{-\pi \psi(t)z^2}dz \Big|
      \lesssim (1+x_2^{-1})|\psi(t)|^{-\frac{1}{2}} \lesssim |\psi(t)|^{-\frac{1}{2}}.
    \end{align}

    We continue by bounding the integrals over paths $\rho_j$.
    Observe now that for $x_1\in [-\delta, \delta]$ and $x_2\ge 1$, the following inequality holds for $j=1,2$
    \begin{align*}
        \Big|\int_{\rho_j(z_0)}e^{-\pi\psi(t)z^2}\, \d z\Big|\leq |z_0|\cdot \max_{z\in \rho_{j}(z_0)} |e^{-\pi \psi(t)z^2}|\leq \frac{\delta}{|\psi(t)|}\max_{z\in \rho_{j}(z_0)} |e^{-\pi \psi(t)z^2}|.
    \end{align*}
    It remains to bound
    \[
    \max_{z\in \rho_{j}(z_0)} |e^{-\pi \psi(t)z^2}|, \quad j=1,2.
    \]
    When $j=1$, observe that
    \begin{equation}
        \label{e:exponent_bound}
        |\pi \psi(t) (rz_0)^2| \leq \frac{\delta^2}{|\psi(t)|}\lesssim \delta^2, \quad r\in [0,1],
    \end{equation}
    so we conclude that 
    \[\max_{z\in \rho_{j}(z_0)} |e^{-\pi \psi(t)z^2}| \lesssim e^{C\delta^2}.\]
    On the other hand, for $j=2$, we have
    \[e^{-\pi \psi(t)(\sqrt{x_2}+rz_0)^2}= e^{-\pi \psi(t)x_2}\cdot e^{-2\pi \psi(t) \sqrt{x_2}rz_0}\cdot e^{-\pi \psi(t)r^2z_0^2}.\]
    The last factor in the product is bounded by $e^{C\delta^2}$ by the $j=1$ case. The middle factor is exponential of an imaginary number so it is absolutely bounded by $1$, and finally, the first factor is absolutely bounded by $1$ because $\Re(-\pi \psi(t)x_2)<0$ whenever $t,x_2>0$.
    Therefore, we obtain
    \begin{equation}
        \label{e:rho_bounds}
        \Big|\int_{\rho_j(z_0)}e^{-\pi\psi(t)z^2}\, \d z \Big|\lesssim  \frac{\delta e^{C\delta^2}}{|\psi(t)|}, \quad j=1,2.
    \end{equation}

    Using traingle inequality and estimates \eqref{e:F_diff_gamma_1}, \eqref{e:gamma_1_abs_bound} and \eqref{e:rho_bounds}, we conclude that for $\delta \leq 1$ the following inequality holds
    \[|F(t)|\lesssim \frac{1}{|\psi(t)|^{\frac{1}{2}}}+\frac{\delta e^{C\delta^2}}{|\psi(t)|}\lesssim 1.\]

    We turn to the proof of inequality \eqref{e:F_upper}. Because of estimate \eqref{e:exponent_bound}, we may invoke an elementary inequality $|e^z-1|\lesssim |z|$ (valid for $|z|\leq 1$) to deduce that the following inequality holds because of the triangle inequality and the estimate \eqref{e:exponent_bound}:
    \begin{align*}
        &\Bigl|\int_{\eps \sqrt{x_2}}^{\eps^{-1} \sqrt{x_2}}t^{-\frac{1}{p}-1}(e^{-\pi\frac{x_1^2}{x_2\psi(t)}}-1)F(t)\d t\Bigr|
        \lesssim \int_{\eps \sqrt{x_2}}^{\eps^{-1} \sqrt{x_2}}t^{-\frac{1}{p}-1}\big|(e^{-\pi\frac{x_1^2}{x_2\psi(t)}}-1)\big|\d t\\
        &\quad \lesssim \int_{0}^{\infty}t^{-\frac{1}{p}-1}\pi\Big|\frac{x_1^2}{x_2\psi(t)}\Big| dt
        \lesssim \delta^2 \int_{0}^{\infty}t^{-\frac{1}{p}-1} \frac{t^2}{(1+4t^4)^{1/2}}dt
        \lesssim \delta^2.
    \end{align*}
    
    Therefore, it remains to prove inequality \eqref{e:F_lower}. Using inequality \eqref{e:F_diff_gamma_1} and \eqref{e:rho_bounds} and \eqref{e:int_gamma1}, for $x_2\ge c_*$ (where $c_*$ is defined in the proof of Proposition \ref{p:Tlow}), the following inequality holds
    \begin{align*}
      \Re F(t) &\ge \Re (\int_{\gamma_1}e^{-\pi \psi(t)z^2}\, \d z) - C_1 \frac{\delta e^{C\delta^2}}{|\psi(t)|}
      \ge C_2 \frac{1}{|\psi(t)|^{\frac{1}{2}}}-C_1 \frac{\delta e^{C\delta^2}}{|\psi(t)|^{\frac{1}{2}}}.
    \end{align*}
    Choosing $\delta>0$ small enough, we conclude the estimate \eqref{e:F_lower}.    
\end{proof}

\section{Proof of Theorem~\ref{t:parabola}}
\label{s:proof_parabola}

\subsection{Transversal case}
The transversal case of estimate \eqref{e:radon_restr} is just the Hausdorff--Young inequality. Therefore, we proceed with the proof of estimate \eqref{e:resriction_strip}.
\begin{proof}[Proof of estimate \eqref{e:resriction_strip} in the transversal case]
    Observe that due to the fact that we use the pushforwad measure, the estimate is equivalent to
    \[\norm{Ef}_{L^q(\R\times I)} \lesssim \norm{f}_{L^p(\R)},\]
    where the operator $E$ was defined in \eqref{e:E_def} and $I=[-\frac{1}{2},\frac{1}{2}]$. By Plancherel's theorem, for any $x_2\in \R$ fixed, we obtain
    \[\norm{Ef(x_1,x_2)}_{L^2_{x_1}(\R)} = \norm{(e^{ix_2\xi^2}f(\xi))^{\vee}(x_1)}_{L^2_{x_1}(\R)} =\norm{f}_{L^2(\R)},
    \]
    so the $L^2\to L^2$ boundedness follows by integration in $x_2\in I$. Finally, the statement follows by complex interpolation with Theorem \ref{e:restriction_2d}.
    
    % Finally, Theorem \ref{t:restriction_2d} implies 
    % \[\norm{Ef}_{L^q(\R^2)}\leq C_{p,q} \norm{f}_{L^p(\R)}\]
    % holds whenever $\frac{1}{p}+\frac{3}{q} = 1$ and $q>4$. Therefore, the statement follows by complex interpolation.
\end{proof}

\subsection{Non-transversal case}
The non-transversal case of estimate \eqref{e:radon_restr}, using \eqref{e:radon_change_var}, is reduced to Proposition \ref{p:T}, so we proceed with the proof of estimate \eqref{e:resriction_strip}.
\begin{proof}[Proof of estimate \eqref{e:resriction_strip} in the non-transversal case]
    Observe that by using parabolic rescaling \eqref{e:E_h_change_of_var} and a change of variables, we can reduce the general case to $\omega^{\bot}=(0,1)$. This change of variables alters the width of the strip, but since we can cover a wider strip with $O_\omega(1)$ thinner strips, we can ignore this issue. 

    When $\omega=(1,0)$, the problem reduces to the following estimate
    \[\|Ef\|_{L^q(I\times\R)}\lesssim_{p,q} \norm{f}_{L^p},\]
    where the operator $E$ is defined in \eqref{e:E_def} and $I=[-\frac{1}{2},\frac{1}{2}]$.

We decompose the operator $E$ into low and high frequency part:
\[E_\low f(x_1,x_2):=E (1_{[-1,1]}f),\quad E_{\high}=E-E_\low.\]
From Theorem \ref{t:compact} it follows that $E_\low$ is bounded whenever 
\begin{equation}
    \label{e:E_low_region}
    (1/p,1/q) \in \triangle OAD \setminus [DE).
\end{equation}
On the other hand, Proposition \ref{p:Thigh} implies that the estimate 
\[\norm{E_\high f(x_1,x_2)}_{L^q_{x_2}(\R)}\lesssim \norm{f}_{L^p(\R)}\]
holds uniformly in $x_1\in \R$ whenever 
\[(1/p,1/q)\in  \triangle ACD\setminus ([A,C]\cup [C,D] \cup [D,E)).\] 
Integrating this in $x_1\in I$ yields
\begin{equation}
    \label{e:E_high}
    \norm{E_\high f}_{L^q(I\times \R)}\lesssim \norm{f}_{L^p(\R)}
\end{equation}
for $(p,q)$ in the same range. Complex interpolation with the restriction theorem (Theorem~\ref{t:restriction_2d}) implies that the estimate \eqref{e:E_high} holds\footnotemark whenever 
\begin{equation}
    \label{e:E_high_region}
    (1/p,1/q) \in \square ACDB \setminus ([A,C]\cup [C,D]\cup [BD] ).
\end{equation}
Therefore, the operator $E$ is bounded on the intersection of regions \eqref{e:E_low_region} and \eqref{e:E_high_region}, and the statement follows.

\footnotetext{Although the full operator $E$ is unbounded on $\triangle ACD\setminus [A,E]$, it is nevertheless useful to show that the high-frequency piece $E_{\high}$ (or $T_\high$) in $\triangle ACD\setminus ([A,C]\cup [C,D]\cup [BD] )$. }
\end{proof}

\subsection{Necessity of the conditions}
\subsubsection{Necessity of $1/p+3/q\ge 1$ in both cases}
\begin{proof}
    We first show that the condition $\frac1p+\frac3q\geq 1$
    is a necessary condition both for transversal and non-transversal case. 
    This is again a classical Knapp-type example. Let $\phi\in C_c^{\infty}$ be a function such that $\phi(0)=1$ and let $f_R=\phi(\frac{\cdot}{R})$. Then $\|f_R\|_{L^p}\sim R^\frac1p$. 
    
    On the other hand, Lemma \ref{l:knapp} gives 
    \[|Ef(x)|\gtrsim R 1_{[0,1/R]\times[0,1/R^2]}(x).\] Consequently, 
    \[
    \|Ef\|_{L^q(\R\times I)} 
    \geq 
    \|Ef\|_{L^q([0,1/R]\times[0,1/R^2])}
    \gtrsim
    R^{1-\frac 3q}.
    \]
    Letting $R\to\infty$ gives the conclusion.
\end{proof}

\subsubsection{Necessity of $1/p+1/q=1$ and $p\leq q$ in the transversal case, Radon transform}
\begin{proof}
    Necessity of the condition $\frac{1}{p}+\frac{1}{q}=1$ and $p\leq q$ for the estimate \eqref{e:radon_restr} follows from the fact that, with $x_2$ fixed,
    \[E f(x_1,x_2)= (e^{2\pi i\xi^2 x_2}f(\xi))^{\wedge}(x_1) \]
    and necessary conditions for the Fourier transform.
\end{proof}

\subsubsection{Necessity of $1/p+1/q\leq 1$ and $q\ge 2$ in the transversal case}
This follows from the necessity of the same conditions in Theorem~\ref{t:compact} (see Sections~\ref{sss:antidiagonal} and \ref{sss:q>=2}).

\subsubsection{Necessity of $p,q=2$ if $p=q$ in the transversal case}
We prove that the estimate \eqref{e:resriction_strip} cannot hold in transversal case for any $p=q\in (2,4]$. 
The proof is inspired by \cite{BCSS89}.
\begin{proof}
    Assume on the contrary that the estimate holds for some $p=q\in (2,4]$. 

    Let $J_0\in \N$ and $J:=2^{J_0}$. Let $\phi\in C_c^{\infty}$ be a function with support in $[-0.1,0.1]$ and $\phi(0)=1$.
    Define
    \[
    f
    =
    \sum_{j=J}^{2J-1} \epsilon_j f_j, 
    \]
    where $\epsilon_j\in\{\pm 1\}$ will be chosen shortly and
    \[
    f_j(\xi)
    =
    e^{2\pi i (a_{j,1} \xi + a_{j,2}\xi^2)}\tau_{j+\frac{1}{2}}\phi(\xi)
    \]
    for $a_j=(a_{j,1},a_{j,2})\in \R^2$ which we will choose later. 
    
    For arbitrary choice of $(\epsilon_j)_j$, note that 
    \begin{equation}\label{e:LHS Kakeya-type}
    \norm{f}_{L^p} 
    = 
    \Big(\sum_{j=J}^{2J-1} \norm{f_j}_{L^p}^p \Big)^{\frac1p} 
    \sim 
    J^{\frac1p}
    \end{equation}
    since $(\supp f_j)_j$ are mutually disjoint.  
    
    On the other hand, by Khintchine's inequality we have
    \[
    \mathbb{E}_{(\epsilon_j)} \Big\|\sum_{j=J}^{2J-1} \epsilon_jEf_j\Big\|_{L^p(\R\times I)}^p
    \gtrsim_p
    \Big\|\Big(\sum_{j=J}^{2J-1} |Ef_j|^2 \Big) ^\frac12\Big\|_{L^p(\R\times I)}^p, 
    \]
    so there exists a choice of signs $(\epsilon_j)_{j}$ such that
    \[\|E f\|_{L^p(\R\times I)} \gtrsim  \Big\|\Big(\sum_{j=J}^{2J-1} |Ef_j|^2 \Big) ^\frac12\Big\|_{L^p(\R\times I)}.
    \]
    Combining this with \eqref{e:LHS Kakeya-type} and by duality of $L^{p/2}$ norm, we obtain
    \begin{equation}\label{e:dual with K}
    \Big|\sum_{j=J}^{2J-1} \int_{\R\times I}  |E_j f_j(x)|^2 1_{K}(x)\,\d x\Big|
    \lesssim
    J^\frac2p |K|^{1-\frac2p}
    \end{equation}
    for any $K\subset \R\times I$. Therefore, application of Lemma \ref{l:knapp} further implies that 
    \begin{equation}\label{e:dual with tree}
    \Big|\sum_{j=J}^{2J}  \int_{\R\times i} 
    \one_{R(v_j,a_j;cj,c/j)}(x)
    1_{K}(x)\,\d x\Big|
    \lesssim
    J^\frac2p |K|^{1-\frac2p},
    \end{equation}
    where $v_j=(-2j-1,1)/|(-2j-1,1)|$.
    
    Let $A^{(0)}$ be the triangle with vertices  $(0,0)$, $(-2J,1)$, $(-4J,1)$. Partition the segment $[-4J,2J]$ into $J$ equal subsegments and note that the vectors $(v_j)_{j=J}^{2J-1}$ point toward the midpoint of each subsegment. Moreover, each triangle with vertices $(0,0)$, $(-2j,1)$ and $(-2j-2,1)$ contains a rectangle of dimensions $c_1J\times c_1/J$ oriented in the direction $v_j$, for some small $c_1>0$, since the angle at the point $(0,0)$ is $\sim J^{-2}$. We choose $c_1$ so that $c_1<c$.
    
    Then if we apply the $J_0$-th iteration of Perron tree construction applied to the triangle $A_0$,
    % (see also Figure~\ref{f:tree})
    we arrive to the shape $A^{(J_0)}$ that contains a rectangle $R(v_j,b_j; c_1/J, c_1J)$ and has area at most 
    \[|A^{(J_0)}|\lesssim J \frac{\log\log J}{\log J}.\] 
    Set 
    $a_j:=b_j$ and define  $K:=\bigcup_{j=J}^{2J-1} R(v_j,b_j;c_1j,c_1/j)$.
    %\[|K|\leq |A^{J_0}|\lesssim J \frac{\log\log J}{\log J}\]
    Since $c_1<c$ and $K\subset A^{(J_0)}$, invoking \eqref{e:dual with tree} gives
    \[
    J\sim \Big|\sum_{j=J}^{2J-1}  \int_{\R\times \mathbb{I}} 
    \one_{R(v_j,a_j;cj,c/j)}(x)
    1_{K}(x)\,\d x\Big|
    \lesssim
    J^\frac2p
    \Big(J \frac{\log\log J}{\log J}\Big)^{1-\frac2p}.
    \]
    Finally, letting $J\to\infty$, we conclude that $p\leq 2$ is necessary.
\end{proof}

% \begin{figure}[t]
%     \begin{tikzpicture}[scale=1.1]
% \begin{scope}[rotate=90]
% \draw (-2,0)--(2,0)--(0,10)--cycle;

% \draw [<->](-2,-0.2)--(2,-0.2);
% \node [right] at (0,-0.2) {$\sim 1$};

% \draw [<->] (-2.2,0)--(-2.2,10);
% \node [below] at (-2.2,5) {$\sim J$};
% \end{scope}

% %%%%%%%%%%
% \begin{scope}[rotate=90,shift={(-5,0)}]
% \begin{scope}[shift={(2/2,0)}]
% \begin{scope}[shift={(2/4,0)}]
% \fill[magenta=0.5] [shift={(2/8,0)}](-2,0)--(-2/4*3,0)--(0,10)--cycle;
% \draw [shift={(2/8,0)},<->] (-2,-0.2)--(-2/4*3,-0.2);
% \node [right]at (-2+2/4,-0.2) {$\sim 1/J$};
% \draw [shift={(2/8,0)}](-2,0)--(-2/4*3,0)--(0,10)--cycle;
% \draw (-2/4*3,0)--(-2/4*2,0)--(0,10)--cycle;
% \end{scope}

% \draw (-2/4*2,0)--(-2/4*1,0)--(0,10)--cycle;
% \draw [shift={(-2/8,0)}](-2/4*1,0)--(0,0)--(0,10)--cycle;
% \end{scope}
% \draw [shift={(2/8,0)}](2/4*1,0)--(0,0)--(0,10)--cycle;
% \draw (2/4*2,0)--(2/4*1,0)--(0,10)--cycle;
% \begin{scope}[shift={(-2/4,0)}]
% \draw (2/4*3,0)--(2/4*2,0)--(0,10)--cycle;
% \draw [shift={(-2/8,0)}](2,0)--(2/4*3,0)--(0,10)--cycle;
% \end{scope}
% \end{scope}

% \end{tikzpicture}
% \caption{A tall Perron's tree rotated 90 degrees in counterclockwise. Each thin triangle (the pink region) is associated to the rectangle $R(v_j,a_j;cj,c/j)$.}\label{f:tree}
% \end{figure}

\subsubsection{Necessity of $1/p+1/q\ge 1/2$ in the non-transversal case}
\begin{proof}
    Let $f=\sum_{j=0}^{J}\epsilon_jf_j$
    where $\epsilon_j\in \{\pm 1\}$ will be chosen shortly, and
    \[f_j(\xi):= \tau_j\phi(\xi)\] 
    Since $(\supp f_j)_j$ are disjoint,  
    \[\norm{f}_{L^p(\R)} = \left( \sum_{j=0}^{J} \norm{f_j}_{L^p(\R)}^p \right)^{1/p} \sim J^{1/p}.\]
    On the other hand, using Khintchine's inequality, we have
    \begin{equation*}
        \mathbb{E}_{(\epsilon_j)_j} 
        \Big(\Big\|\sum_{j=0}^{J} \epsilon_jEf_j(x,t)\Big\|_{L^p(\R)}^p \Big)
        \gtrsim_p \Big\|\Big( \sum_{j=0}^{J} |Ef_j(x,t)|^2 \Big)^{1/2}\Big\|_{L^p(\R)}^p.
    \end{equation*}
    we conclude that there exists some choice of $(\epsilon_j)_j$ such that
    \begin{equation}
        \label{e:khin_for_triangle}
        \|Ef\|_{L^p(\R)} \gtrsim_p \Big\|\Big( \sum_{j=0}^{J} |Ef_j(x,t)|^2 \Big)^{1/2}\Big\|_{L^p(\R)}.
    \end{equation}
    
    By Lemma \ref{l:knapp} we know that
    \[
    E\tau_j\phi(x)\gtrsim 1_{R(v_j,0;cj,c/j)}(x).
    \]
    where $v_j=(-2j,1)/|(-2j,1)|$. Note that 
    \begin{align*}
        \bigcap_{j=0}^J R(v_j,0,cj,c/j)
        =
        R(v_J,0,cJ,c/J) \cap \{(x_1,x_2)\in\R^2: -c\leq x_1\leq c\}=:A,
    \end{align*}
    and that $A$ is a parallelogram with height $c$ and base $\sim J^{-1}$. Therefore, using
    \[\Big(\sum_{j=0}^{J} |Ef_j(x,t)|^2 \Big)^{1/2} \gtrsim J^{\frac{1}{2}}1_{A}(x),\]
    the estimate \eqref{e:khin_for_triangle} implies
    \[\|Ef\|_{L^p}\gtrsim J^{1/2}|A|^{\frac{1}{q}} \sim J^{\frac{1}{2}-\frac{1}{q}}. \]
    Letting $J\to\infty$, we reach the conclusion.
\end{proof}
\subsubsection{Failure at $p=q=4$ in non-transversal case}
\begin{proof}
    Using parabolic rescaling \eqref{e:E_h_change_of_var}, we reduce the problem to the $\omega = (1,0)$ case.
    In this case, we need more precise estimates than the lower bound that follows from Knapp-type argument because it does not take the tail of the function into account.
    
    Let $0<\eps \ll1$. We test the operator on the function $f_\eps(\xi)=e^{-\pi \eps \xi^2}$. Observe that
    \[\norm{f_\eps}_{L^p(\R)}\sim \eps^{-\frac{1}{2p}}.\]
    Using \eqref{e:gauss int}, we have
    % \[Ef_{\eps}(x_1,x_2)= (\eps-2ix_2)^{-\frac{1}{2}}e^{-\frac{\pi x_1^2}{\eps -2ix_2}}\]
    % Therefore:
    %Observing that $\Re( 1/(\eps -2ix_2))=\eps/(\eps^2 +4x_2^2)$, we conclude that:
    \[\abs{Ef(x_1,x_2)}^q = (\eps^2+4x_2^2)^{-\frac{q}{4}} e^{- q\frac{\pi \eps x_1^2}{\eps^2+4x_2^2}}.\]
    Therefore, using change of variables we obation
    \[ \|Ef\|_{L^q(I\times \R)}^q = \eps^{-\frac{q}{2}+\frac{3}{2}} \int_{\R}(1+4x_2^2)^{-\frac{q}{4}+\frac{1}{2}}\int_{-(\eps(1+4x_2^2))^{-1/2}}^{(\eps(1+4x_2^2))^{-1/2}} e^{-4\pi x_1^2}dx_1dx_2.\]
    % \begin{align*}
    %     \norm{Ef}_{L^q(I\times \R)}^q &=\int_{I\times \R}(\eps^2+4x_2^2)^{-1} e^{-q\frac{\pi \eps x_1^2}{\eps^2+4x_2^2}} dx_1dx_2 =|(x_1,x_2)\mapsto (\eps^{-1/2}x_1,\eps x_2)|\\
    %     &= \eps^{-\frac{q}{2}+\frac{3}{2}}\int_{\R}\int_{-\eps^{-\frac{1}{2}}}^{\eps^{-\frac{1}{2}}}(1+4x_2^2)^{-\frac{q}{4}}e^{-q\frac{\pi x_1^2}{1+4x_2^2}}dx_1dx_2= |x_1\mapsto (1+4x_2^2)^{1/2}x_1|\\
    %     &=\eps^{-\frac{q}{2}+\frac{3}{2}} \int_{\R}(1+4x_2^2)^{-\frac{q}{4}+\frac{1}{2}}\int_{-(\eps(1+4x_2^2))^{-1/2}}^{(\eps(1+4x_2^2))^{-1/2}} e^{-4\pi x_1^2}dx_1dx_2.
    % \end{align*}
    Let $q=4$. When $\eps(1+4x_2^2)\leq 1$, the inner integral is greater than $\int_{-1}^{1}e^{-4\pi x^2}dx\gtrsim 1$, so when $x_2< \eps^{-\frac{1}{2}}/8$, the total expression is bounded from below with
    \[\eps^{-\frac{1}{2}}\int_{-\eps^{-\frac{1}{2}}/8}^{\eps^{-\frac{1}{2}}/8}(1+4x_2^2)^{-\frac{1}{2}}dx_2 \gtrsim \eps^{-\frac{1}{2}}\log \eps^{-1}.\]
    Therefore, if $(4,4)$ estimate was true, one would have
    \[\eps^{-\frac{1}{8}}(\log \eps^{-1})^{\frac{1}{4}}\lesssim \eps^{-\frac{1}{8}},\]
    what gives a contradiction when $\eps\to 0$.
\end{proof}

%%%%%%%%%%%%%%%%%%%%%%%%%%%%%%%%%%%%%%%%%%%%%%%
\section*{Acknowledgments}
The authors thank Rudi Mrazović for helpful comments on an earlier version of this manuscript.

This work was supported in part by the \emph{Croatian Science Foundation} project IP-2022-10-5116 (FANAP).

%%%%%%%%%%%%%%%%%%%%%%%%%%%%%%%%%%%%%%%%%%%%%%%%

\bibliography{bibliography}

\begin{thebibliography}{10}

\bibitem{BCSS89}
W.~Beckner, A.~Carbery, S.~Semmes, and F.~Soria.
\newblock A note on restriction of the {F}ourier transform to spheres.
\newblock {\em Bull. London Math. Soc.}, 21(4):394--398, 1989.

\bibitem{BN21}
J.~Bennett and S.~Nakamura.
\newblock Tomography bounds for the {F}ourier extension operator and applications.
\newblock {\em Math. Ann.}, 380:119--159, 2021.

\bibitem{BNS24}
J.~Bennett, S.~Nakamura, and S.~Shiraki.
\newblock Tomographic {F}ourier extension identities for submanifolds of $\mathbb{R}^n$.
\newblock {\em Selecta Math.}, 30:80, 2024.

\bibitem{BK23}
Aleksandar Bulj and Vjekoslav Kova\v{c}.
\newblock Asymptotic behavior of {$L^p$} estimates for a class of multipliers with homogeneous unimodular symbols.
\newblock {\em Trans. Amer. Math. Soc.}, 376(7):4539--4567, 2023.

\bibitem{Cai25}
Hannah Cairo.
\newblock A counterexample to the {M}izohata--{T}akeuchi conjecture.
\newblock Available at https://arxiv.org/abs/2502.06137.

\bibitem{Feff70}
Charles Fefferman.
\newblock Inequalities for strongly singular convolution operators.
\newblock {\em Acta Math.}, 124:9--36, 1970.

\bibitem{GOW22}
S.~Gan, C.~Oh, and S.~Wu.
\newblock A note on local smoothing estimates for fractional {S}chr\"odinger equations.
\newblock {\em J. Funct. Anal.}, 283:109558, 2022.

\bibitem{GaoLiWang22}
C.~Gao, J.~Li, and L.~Wang.
\newblock A type of oscillatory integral operator and its applications.
\newblock {\em Math. Z.}, 302:1551--1584, 2022.

\bibitem{GaoMiaoZheng22}
C.~Gao, C.~Miao, and J.~Zheng.
\newblock Improved local smoothing estimates for the fractional {S}chr\"odinger operator.
\newblock {\em Bull. Lond. Math. Soc.}, 54:54--70, 2022.

\bibitem{Guth16}
Larry Guth.
\newblock A restriction estimate using polynomial partitioning.
\newblock {\em J. Amer. Math. Soc.}, 29(2):371--413, 2016.

\bibitem{Lu23}
Y.~Lu.
\newblock Local smoothing estimates of fractional {S}chr\"odinger equations in $\alpha$-modulation spaces with some applications.
\newblock {\em J. Evol. Equ.}, 23:38, 2023.

\bibitem{PSS97}
Yibiao Pan, Gary Sampson, and Pawe\l Szeptycki.
\newblock {$L^2$} and {$L^p$} estimates for oscillatory integrals and their extended domains.
\newblock {\em Studia Math.}, 122(3):201--224, 1997.

\bibitem{Rogers08}
K.~M. Rogers.
\newblock A local smoothing estimate for the {S}chr\"odinger equation.
\newblock {\em Adv. Math.}, 219:2105--2122, 2008.

\bibitem{Schippa22}
R.~Schippa.
\newblock On smoothing estimates in modulation spaces and the nonlinear {S}chr\"odinger equation with slowly decaying initial data.
\newblock {\em J. Funct. Anal.}, 282:109352, 2022.

\bibitem{Stein93book}
Elias~M. Stein.
\newblock {\em Harmonic analysis: real-variable methods, orthogonality, and oscillatory integrals}, volume~43 of {\em Princeton Mathematical Series}.
\newblock Princeton University Press, Princeton, NJ, 1993.
\newblock With the assistance of Timothy S. Murphy, Monographs in Harmonic Analysis, III.

\bibitem{SW71Book}
Elias~M. Stein and Guido Weiss.
\newblock {\em Introduction to {F}ourier analysis on {E}uclidean spaces}.
\newblock Princeton Mathematical Series, No. 32. Princeton University Press, Princeton, 1971.

\bibitem{TaoRestNotes}
Terence Tao.
\newblock Lecture notes.
\newblock Available at https://terrytao.wordpress.com/2020/03/29/247b-notes-1-restriction-theory/.

\bibitem{Tao99}
Terence Tao.
\newblock The {B}ochner-{R}iesz conjecture implies the restriction conjecture.
\newblock {\em Duke Math. J.}, 96(2):363--375, 1999.

\bibitem{Wang22}
Hong Wang.
\newblock A restriction estimate in {$\mathbb{R}^3$} using brooms.
\newblock {\em Duke Math. J.}, 171(8):1749--1822, 2022.

\bibitem{Xiao17}
Lechao Xiao.
\newblock Endpoint estimates for one-dimensional oscillatory integral operators.
\newblock {\em Adv. Math.}, 316:255--291, 2017.

\bibitem{Zyg74}
A.~Zygmund.
\newblock On {F}ourier coefficients and transforms of functions of two variables.
\newblock {\em Studia Math.}, 50:189--201, 1974.

\bibitem{ZygBook}
A.~Zygmund.
\newblock {\em Trigonometric series. {V}ol. {I}, {II}}.
\newblock Cambridge Mathematical Library. Cambridge University Press, Cambridge, third edition, 2002.
\newblock With a foreword by Robert A. Fefferman.

\end{thebibliography}
\bibliographystyle{plain}

\end{document}